
\documentclass[preprint,10pt,authoryear]{elsarticle}



\usepackage{amssymb}
\usepackage[export]{adjustbox} 
\usepackage{amsthm} 
\newtheorem{theorem}{Theorem}
\usepackage{longtable}
\usepackage{array}  
\usepackage{multirow}
\usepackage{algorithm}
\usepackage{algorithmic}   

\usepackage{listings}

\lstset{
  basicstyle=\ttfamily\small,
  columns=flexible,
  breaklines=true,
  frame=single,
  captionpos=b
}
\newcolumntype{L}[1]{>{\raggedright\arraybackslash}p{#1}}
\newcolumntype{C}[1]{>{\centering\arraybackslash}p{#1}}
\newcolumntype{R}[1]{>{\raggedleft\arraybackslash}p{#1}}

\usepackage[colorlinks=true, citecolor=blue]{hyperref}
\usepackage{pdflscape}
\usepackage{rotating}
\usepackage{multirow}
\theoremstyle{definition}

\usepackage{float} 
\usepackage{booktabs}
\usepackage{amsmath}
\usepackage{array}
\usepackage{comment}


\begin{document}

\begin{frontmatter}



\title{Flow-Through Tensors: A Unified Computational Graph Architecture for Multi-Layer Transportation Network Optimization} 


\author[1]{Xuesong (Simon) Zhou}
\author[2]{Taehooie Kim}
\author[3]{Mostafa Ameli}
\author[1]{Henan (Bety) Zhu}
\author[4]{Yudai Honma}
\author[1]{Ram M. Pendyala}

\affiliation[1]{organization={School of Sustainable Engineering and the Built Environment},
    addressline={Arizona State University},
    city={Tempe},
    state={AZ},
    country={United States}}
\affiliation[2]{organization={Maricopa Association of Governments (MAG)},
    city={Phoenix},
    state={AZ},
    country={United States}}
\affiliation[3]{organization={COSYS-GRETTIA, Gustave Eiffel University},
    city={Paris},
    country={France}}
\affiliation[4]{organization={Institute of Industrial Science},
    addressline={The University of Tokyo},
    city={Tokyo},
    country={Japan}}

\begin{abstract}
Modern transportation network modeling increasingly involves the integration of diverse methodologies -- including sensor-based forecasting, reinforcement learning, classical flow optimization, and demand modeling -- that have traditionally been developed in isolation. This paper introduces Flow Through Tensors (FTT), a unified computational graph architecture that connects origin destination flows, path probabilities, and link travel times as interconnected tensors. Our framework makes three key contributions: first, it establishes a consistent mathematical structure that enables gradient-based optimization across previously separate modeling elements; second, it supports multidimensional analysis of traffic patterns over time, space, and user groups with precise quantification of system efficiency; third, it implements tensor decomposition techniques that maintain computational tractability for large scale applications. These innovations collectively enable real time control strategies, efficient coordination between multiple transportation modes and operators, and rigorous enforcement of physical network constraints. The FTT framework bridges the gap between theoretical transportation models and practical deployment needs, providing a foundation for next generation integrated mobility systems.

\end{abstract}

\begin{graphicalabstract}
    \centering
    \includegraphics[width=\linewidth]{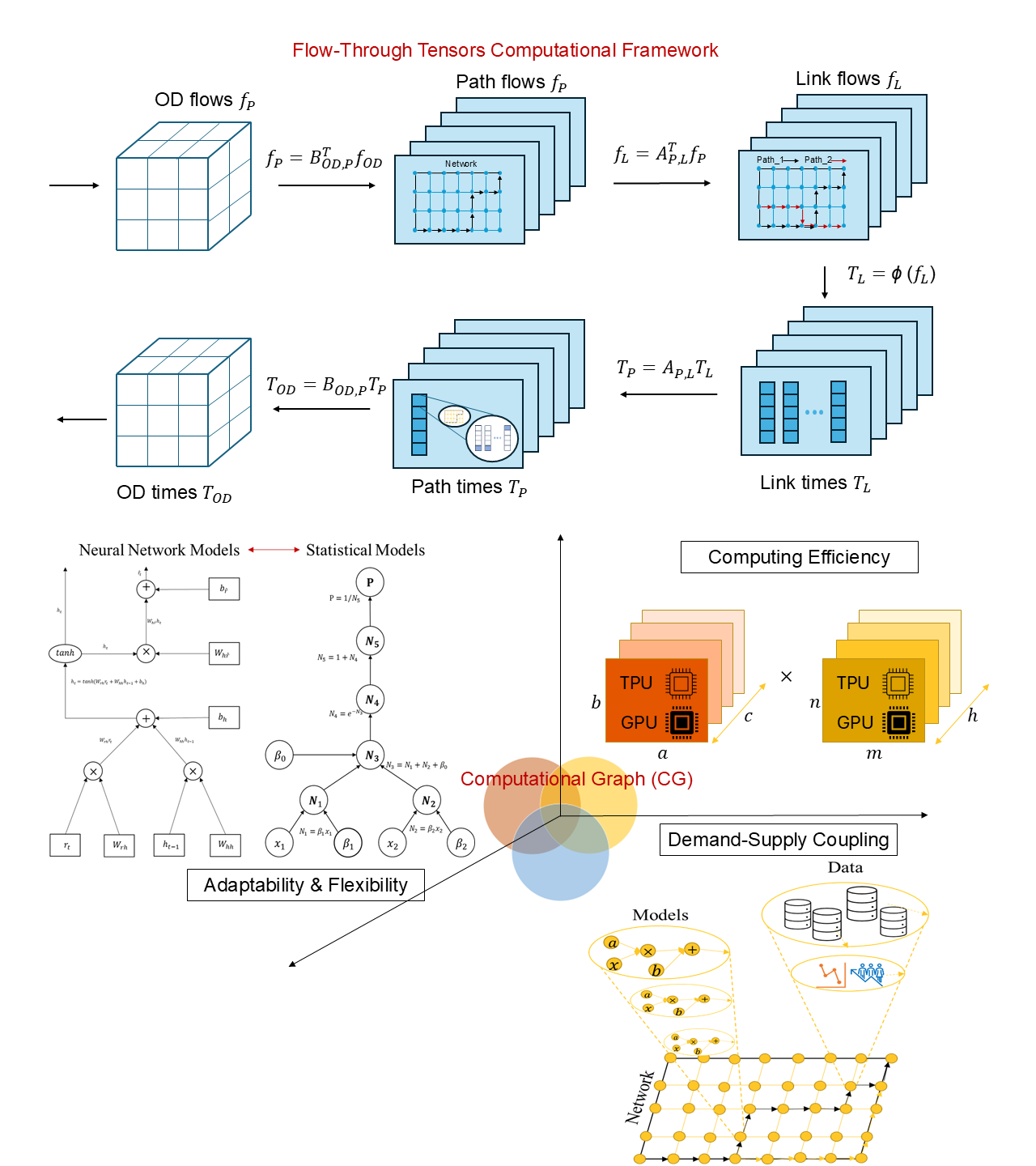}
    \label{fig:enter-label}
\end{graphicalabstract}

\begin{highlights}
\item A unified tensor-based computational graph for advanced traffic assignment across multiple modeling layers.
\item Layered KKT-based spatiotemporal modeling capturing multimodal interactions and constraints.
\item Real-time traffic management and dynamic equilibrium analysis via symbolic gradient computations.
\item Practical scalability through tensor decomposition, autodifferentiation, and ADMM-based decomposition.
\end{highlights}

\begin{keyword}


Computational Graphs \sep Tensor Optimization \sep Traffic Assignment \sep Transportation Networks \sep Multimodal Coordination \sep Real-Time Control \sep Price of Anarchy
\end{keyword}

\end{frontmatter}



\section{Introduction and Background: The Need for Integration in Transportation Modeling}
\label{sec:introduction}

Modern transportation networks are growing in both size and complexity. They feature multiple modes, enormous data streams, and dynamic interactions between travelers, vehicles, and infrastructure. Although significant progress has been made in short-term forecasting, local control, and classical flow modeling, many solutions remain largely disconnected and not feasible to apply to real large-scale use-cases. We see several broad traditions in current transportation systems-related research, each with distinct strengths and limitations that motivate our integrated approach:

\textbf{Sensor-Driven Traffic Prediction:} Research in this area fuses real-time or historical sensor data (loop detectors, cameras, connected vehicles) for short-term traffic state prediction \citep{yang2017origin}. Typical methods include time-series forecasting, deep learning approaches (e.g., CNN, GNN, LSTM), and multi-source data fusion. These techniques achieve high \emph{prediction accuracy} under predominantly stationary conditions but often lack the capacity for system-level modeling in scenarios characterized by strong demand–supply interactions, such as traffic incidents, severe congestion, long-term prediction and optimization, and the representation of dynamic behavioral responses. Recent advances in physics-informed deep learning have begun to address these limitations \citep{shi2021physics}.

\textbf{Reinforcement Learning \& Control:} Reinforcement learning (RL), model predictive control (MPC), and related control-theoretic frameworks are applied to local decisions such as traffic signal timing, ramp metering, or autonomous vehicle maneuvers \citep{peeta2001foundations}. Although these methods are \emph{highly adaptive} at a micro-level, extending them to network-wide equilibrium or capturing longer-term demand shifts remains a challenging and active area of research, as they are typically limited to myopic settings in multi-agent environments \citep{liu2025two}.

\textbf{Classical Network Flow Modeling:} Conventional flow models and dynamic traffic assignment (DTA) emphasize volume–delay functions, link/path capacity constraints, and user versus system objectives \citep{peeta2001foundations, ameli2019heuristic}. They offer solid theoretical foundations (user equilibrium, system optimal) but typically operate \emph{offline} and lack integration of real-time data and demand elasticity.

\textbf{Demand and Behavioral Modeling:} Activity-based, discrete choice, and tour-based models explain how travelers decide \emph{if, when, and how} to travel, considering socio-economic attributes and daily schedules \citep{rasouli2014activity}. Despite their realism, these models are computationally intensive and rarely integrated with \emph{real-time data} or AI-based supply-side interventions.

Modern transportation networks face complex challenges, including integrating multi-layer factors (origin–destination matrices, path assignments, and link flows) \citep{yang2018origin}, overcoming scalability issues in large-scale and multi-modal settings \citep{liu2019tailored}, and addressing the computational inefficiency of iterative procedures in real-world applications \citep{ameli2020simulation}. Additionally, harnessing diverse data streams for OD estimation \citep{yang2017origin} and managing multi-modal complexity (commodities, blocks, paths, trains, crew, vehicles) require advanced frameworks \citep{crainic2018intelligent, ameli2021computational}. Although solvers like CPLEX and Gurobi perform well in linear and mixed-integer programming, they lack native flow propagation, struggle with high-dimensional scalability, and offer limited support for non-linear or stochastic elements \citep{bertsimas2021interpretable}.

\subsection{Evolution of Computational Graphs in Transportation Modeling}

One potential research pathway for addressing the complexity and computational challenges in transportation system analysis is the use of new computing paradigms, such as Computational Graph (CG) and its underlying differentiable architectures, which result in graphs that describe mathematical expressions via nodes and edges. These architectures allow for the incorporation of latent domain knowledge and theory-driven approaches, while automatic differentiation facilitates gradient-based optimization of complex planning models. Consequently, backpropagation methods applied in transportation can reduce computational complexity, minimize inconsistency gaps between sub-models, and enhance convergence in interactively layered structures.

Table \ref{tab:comp_graph_evolution} shows the chronological development of the layered computational graph for travel demand estimation with forward-backward propagation.

\begin{table}[H]
\centering
\small
\caption{Representative References in Chronological Order on Computational Graphs and Forward-backward Algorithms in Transportation Network Modeling Applications; \textit{ML: Machine Learning}.\label{tab:comp_graph_evolution}}
\resizebox{\textwidth}{!}{
\begin{tabular}{L{3cm}|L{3cm}|L{4cm}}
\hline
\textbf{Reference} & \textbf{Application Domain} & \textbf{Key Contribution} \\
\hline
\cite{wu2018hierarchical} & Travel Demand Estimation & Layered computational graph with forward-backward propagation algorithm \\
\hline
\cite{ma2020estimating} & Multi-class OD Estimation & Forward-backward algorithm on computational graphs with dynamic traffic patterns \\
\hline
\cite{kim2020stepwise} & Demand Prediction & Stepwise interpretable ML for city-wide taxi demand prediction \\
\hline
\cite{kim2022computational} & Discrete Choice Models & Framework integrating econometric models such as discrete choice model with ML algorithms \\
\hline
\cite{shang2022integrated} & Transit Network Design & Layered optimization with forward-passing and backpropagation for service networks \\
\hline
\cite{lu2023physics} & Traffic State Estimation & Physics-informed neural networks on computational graphs \\
\hline
\cite{patwary2023iterative} & Bilevel Network Equilibrium Optimization & Iterative backpropagation method for efficient gradient estimation \\
\hline
\cite{liu2023end} & Equilibrium Learning & End-to-end learning of user equilibrium with implicit neural networks \\
\hline
\cite{kim2024computational} & Integrated Demand-Supply & Lagrangian relaxation-based mathematical programming reformulation \\
\hline
\cite{guarda2024estimating} & Network Flow Estimation & Day-to-day system-level data for estimating flows and travel behavior \\
\hline
\cite{liu2025end} & Equilibrium Theory & Expressivity, generalization, and optimization of user equilibrium learning \\
\hline
\cite{Du2025} & Simulation-Based Optimization & Differentiable End-to-End Simulation-Based Optimization \\
\hline
Our Work & Explicit Multi-layer Framework & Flow-Through-Tensor framework for cross-layer coordination \\
\hline
\end{tabular}
}
\end{table}

Our paper aims to synthesize existing transportation computational graph studies into a more general framework using tensor representations. In particular, we focus on problem decomposition in scenarios involving multiple blocks of scheduling and optimization, alongside modular integrations of physics-informed models and machine learning modules. While current CG methods are often applied to specific challenges—such as discrete choice model calibration, enhanced od demand estimation, traffic assignment, or transit service planning—our approach is designed to provide a unified framework that not only reviews these diverse applications but also highlights the distinctions between Flow-Through-Tensor (FTT) and existing CG methodologies. This dual focus enables us to address both broad challenges in transportation modeling and the specific limitations inherent within current CG frameworks. Building on above foundation, the subsequent sections in this paper are organized to demonstrate how each component of the framework interlocks with the others. Section 2 details the Flow-Through-Tensor framework, introducing key variables and incidence matrices that formalize the model. Section 3 focuses on tensor-based scheduling, illustrating its advantages in handling multi-period and multi-modal dynamics, while Section 4 explains the cross-block coordination achieved via advanced techniques like Alternating Direction Method of Multipliers (ADMM). Section 5 further connects our approach to modern AI methods through gradient-based optimization and backpropagation, ensuring that theoretical rigor meets practical application. Finally, Section 6 summarizes the contributions and outlines future research, reinforcing the coherent progression of ideas throughout the paper.

\section{The Flow-Through-Tensor (FTT) Framework: An Integrated Approach}
\label{sec:ftt_framework}

The key idea behind this FTT framework is to represent the transportation network using a layered structure that connects OD flows, path flows, and link flows through tensor operations and computational graphs, creating a unified modeling environment that bridges traditional transportation modeling with modern computational techniques.

Our work aims to extend the static traffic assignment and dynamic traffic assignment frameworks of \citet{mahmassani1991system} and \citet{peeta1995multiple} by introducing tensor-based representations that can efficiently handle the complexity of multi-period, multi-user class assignments. The approach balances the proactive optimization for predicted conditions with reactive adjustments to real-world deviations in supply and demand, as advocated by \citet{peeta2001foundations} in their foundational work on DTA. Furthermore, the integration of both system optimal and User Equilibrium (UE) perspectives in a temporal framework builds upon the time-dependent traffic assignment research of \citet{peeta1995system}, while the tensor formulation provides a mathematical foundation that could enhance agent-based (trip-based) modeling frameworks like POLARIS \citep{auld2016polaris} by enabling more efficient representation of complex spatio-temporal patterns.

The tensor-based approach to transportation modeling has conceptual roots in earlier information-theoretic work, particularly \citep{zhou2010information}, which explored the use of matrix determinants and traces to quantify uncertainty in OD demand estimation. In that work, OD flows were mapped to paths and then to links using link proportion matrices, establishing a mathematical foundation for the spatial and temporal mappings that we now express in higher-dimensional tensor forms. Moreover, this tensor-based structure can be naturally extended to model trip-chaining behavior by incorporating additional interaction dimensions, such as intermediate stops or sequencing constraints. For instance, Honma  \citep{honma2015spatial} proposed a spatial interaction model specifically tailored for trip chains, which conceptually parallels the current tensor-based formulation in its use of high-dimensional representations.

\subsection{Variable Definition}



In this paper, we aim to employ a consistent notation structure where scalars use lowercase italic letters (e.g., $f_{od}$, $t_\ell$), vectors (tensor forms) use bold lowercase letters (e.g., $\mathbf{f}$), matrices use bold uppercase letters (e.g., $\mathbf{A}$), and tensors use calligraphic uppercase letters (e.g., $\mathcal{F}$). Sets are represented by regular uppercase letters (e.g., $O$, $D$, $L$), while parameters typically use Greek letters. All notations in this study are presented in \ref{appendixa}.

In static traffic assignment, we consider traffic flow volumes time-independently and travel times in three different resolutions, namely origin-destination (OD) pair, path, and link. Thus, we typically track six core variables below.

 Since the primary focus in the static traffic assignment problem is solely on the dimension of traffic flow, the tensors are reduced to vectors and matrices in this section. Specifically, scalars are $0^{\text{th}}$-order tensors, vectors are $1^{\text{st}}$-order tensors, and matrices are $2^{\text{nd}}$-order tensors.

\begin{enumerate}
\item OD flow $f_{od}$ denotes the exogenous demands for OD pair $\left(o,d\right)\in OD$. 
\item OD travel time $t_{od}$ denotes the average or weighted cost for traveling from $o$ to $d$. 
\item Path flow $f_{p}$ denotes the amount of flow on route $p \in P$. 
\item Path travel time $t_{p}$ denotes the sum of the travel times on all constituent links of path $p \in P$. 
\item Link flow $f_{\ell}$ denotes the number of agents (e.g., vehicles or travelers) using link $\ell\in L$. 
\item Link travel time $t_{\ell}$ denotes the time that an agent travels through link $\ell$. Usually, $t_{\ell}$ can be viewed as a function of the link flow, especially via a volume-delay function. 
\end{enumerate}

\subsection{Two Mapping Matrices} 
We define two key matrices that map among the OD, path, and link layers:

\begin{enumerate} 
\item \textbf{Path-to-Link Incidence Matrices $\mathbf{A}_{\mathrm{P},\mathrm{L}} \in \mathbb{R} ^{\lvert P \rvert \times \lvert L\rvert}$ }\\
       Each row of $\mathbf{A}_{\mathrm{P},\mathrm{L}}$ represents a path $p$ and each column represents a link $\ell$. An arbitrary element of $\mathbf{A}_{\mathrm{P},\mathrm{L}}$, denoted by $a_{p,\ell}$, represents whether path $p$ travels through link $\ell$.

\begin{equation}
a_{p,\ell} = \begin{cases}
        1 & \text{if link } \ell \text{ is on path } p,\\
        0 & \text{otherwise}.
      \end{cases}
\end{equation}

\item \textbf{OD-to-Path Probability Matrices $\mathbf{B}_{\mathrm{OD},\mathrm{P}} \in \mathbb{R} ^{\lvert OD\rvert \times \lvert P\rvert}$}\\
 Each row of $\mathbf{B}_{\mathrm{OD},\mathrm{P}}$ represents a path $p$ and each column represents an OD pair $(o,d)$. An arbitrarily element of $\mathbf{B}_{\mathrm{OD},\mathrm{P}}$, denoted by $b_{od,p}$, represents the proportion that travelers of the OD pair $(o,d)$ will choose the path $p$.
 
\begin{equation}
b_{od,p} = \begin{cases}
        1 & \text{if path } p \text{ serves OD pair } (o,d) \text{ uniquely,}\\
        0 & \text{if path } p \text{ does not serve OD pair } (o,d),\\
        \gamma \, (\text{where } 0< \gamma <1) & \text{otherwise}.
      \end{cases}
\end{equation}

\end{enumerate}

It is important to note that the matrix $\mathbf{B}_{\mathrm{OD},\mathrm{P}}$ representing the OD-to-path mapping is not fixed, but is dynamically determined by the route choice behaviors of travelers. These behaviors may follow UE, System Optimum (SO), or be influenced by smart reservation groups, creating a dynamic mapping that evolves based on the specific equilibrium conditions and participation patterns in the network. The logit-related tensor or computational graph implementations that enable a computationally efficient implementation of our framework can be found in \cite{kim2022computational} and \cite{kim2024computational}.

A related concept is the link proportion matrix, which maps OD demand to link flow distributions. This approach has been widely used in OD matrix estimation, with foundational contributions by \citet{lo1996estimation}—who developed a statistical method based on random link choice proportions—\citet{yang1992estimation}—who addressed estimation from link traffic counts on congested networks—and \citet{cascetta1984estimation}.

\subsection{Path-Based Formulation of traffic assignment}

In the \textbf{path-based} model, the primary decision variables are the \textbf{path flows} $\mathbf{f}_P$. The standard relationships are:

\begin{enumerate}
\item \textbf{OD Flow to Path Flow}
  \begin{align}
    f_{od} &= b_{od,p}f_p , \quad \forall (o,d) \in OD.
  \end{align}

\item \textbf{Path Flow to Link Flow}
  \begin{align}
    f_\ell &= \sum_{p\in P} a_{p,\ell}\, f_p, \quad \forall \ell \in L.
  \end{align}

\item \textbf{Link Performance Function}\\
A general link performance function $\phi_{\ell}(\cdot)$ reflects the influence of the link volume $f_{\ell}$ on its travel time $t_{\ell}$, i.e., $t_\ell = \phi_\ell(f_\ell), \forall \ell \in L$. The most frequently utilized link performance function is
  \begin{align}
    t_\ell &= t_\ell^0 \left( 1 + \alpha \left(\frac{f_\ell}{C_\ell}\right)^\beta \right), \quad \forall \ell \in L,
  \end{align}
where $t_\ell^0$ is the free-flow travel time on link $\ell$, $f_\ell$ is the flow (more precisely inflow demand) on link $\ell$, $C_\ell$ is the capacity of link $\ell$, and $\alpha, \beta$ are parameters of the Bureau of Public Roads (BPR) function \citep{manual1964urban}.The parameter $\beta > 0$ serves as the congestion sensitivity parameter and is theoretically grounded in fluid-queue based volume-delay functions with polynomial arrival rates \citep{newell2013applications,cheng2022estimating, zhou2022meso}. Similarly, the parameter \(\alpha\) finds its foundation in the shape of inflow (arrival) curves, as demonstrated by Newell's fluid-queue analyses \citep{newell2013applications}. Furthermore, in \cite{zhou2022meso}, \(\alpha\) is decomposed into two distinct components: one reflecting the Demand-to-Capacity Sensitivity—which describes how quickly capacity degrades as demand approaches its limit—and another capturing the Congestion Duration Sensitivity, quantifying the impact of prolonged congestion on the longest waiting times.

\item \textbf{Path Travel Times} The travel time on path $p$ is the sum of link times along it.
  \begin{align}
    t_p &= \sum_{\ell \in L} a_{p,\ell}\,t_\ell, \quad \forall p \in P,
  \end{align}
\item \textbf{OD Travel Times}
  \begin{align}
    t_{od}&= \frac{\sum_{p\in P_{(o,d)}} f_p\,t_p}{f_{od}}, \quad \forall f_{od}>0.
  \end{align}
\end{enumerate}

By employing the aforementioned notation system, we can represent the traffic assignment problem as follows:

\begin{equation}
\underset{f_\ell}{\min} \sum_{\ell \in L} \int_0^{f_\ell} \phi_\ell(\omega)\, d\omega
\end{equation}

\subsection{Link-Based Formulation}

The traffic assignment problem can also be represented in a link-based model if we use user equilibrium as the principle for assignment, replacing the OD-to-path probability $b_{\text{od,p}}$. In the link-based (or arc-based) formulation, the link flows $\mathbf{f}_L$   serve as the primary decision variables.

Flow conservation at each node for every origin-destination (OD) pair is written as
\begin{align}
  \sum_{\ell \in \mathrm{Out}(n)} f_\ell^{od} - \sum_{\ell \in \mathrm{In}(n)} f_\ell^{od} = 
  \begin{cases}
    f_{od}, & \text{if } n = o, \\[1mm]
    -f_{od}, & \text{if } n = d, \\[1mm]
    0, & \text{otherwise},
  \end{cases}
\end{align}
where $o$ and $d$ denote the origin and destination nodes for the OD pair, respectively.
Here, $\mathrm{Out}(n)$ and $\mathrm{In}(n)$ denote the sets of outbound and inbound links at node $n$.
Summing the flows over all OD pairs then yields the total link flows $\mathbf{f}_L$.

\subsection{Flow-Through-Tensor (FTT) Formulation}

The Flow-Through-Tensor (FTT) framework offers a unified approach to transportation networks by integrating linear incidence mappings within a computational graph structure. This formulation systematically connects OD flows, path flows, link flows, and their corresponding travel times through a series of well-defined transformations.

The complete FTT system can be expressed as:
\begin{align}
\text{FTT System:} \quad
\begin{cases}
\text{Forward Flow Propagation:} \\
\quad \mathbf{f}_P = {\mathbf{B}_{\mathrm{OD},\mathrm{P}}}^{\mathsf{T}}\,\mathbf{f}_{OD} & \text{(OD flows to path flows)}\\
\quad \mathbf{f}_L = {\mathbf{A}_{\mathrm{P},\mathrm{L}}}^{\mathsf{T}}\,\mathbf{f}_P & \text{(Path flows to link flows)}\\
\quad \mathbf{t}_L = \phi(\mathbf{f}_L) & \text{(Link performance function)}\\
\text{Backward Time Propagation:} \\
\quad \mathbf{t}_P = {\mathbf{A}_{\mathrm{P},\mathrm{L}}}\,\mathbf{t}_L & \text{(Link times to path times)}\\
\quad \mathbf{t}_{OD} = {\mathbf{B}_{\mathrm{OD},\mathrm{P}}}\,\mathbf{t}_P & \text{(Path times to OD times)}
\end{cases}
\end{align}

This creates a complete computational graph connecting inputs to outputs through the composite function (as represented in Fig.\ref{fig:fft-tensor}). The objective function formulation depends on the specific transportation problem being addressed. For user equilibrium assignment, we can use either the Beckmann-McGuire-Winsten link flow-based transformation or employ a gap function based on path flows and path costs. For origin-destination matrix estimation, we utilize a generalized nonlinear least squares (NLS) formulation with OD-level variables.
\begin{equation}
\mathbf{f}_{OD} \to \mathbf{f}_P \to \mathbf{f}_L \to \mathbf{t}_L = \phi(\mathbf{f}_L) \to \mathbf{t}_P \to \mathbf{t}_{OD} 
\end{equation}

\begin{figure}[H]
    \centering
    \includegraphics[width=0.75\linewidth]{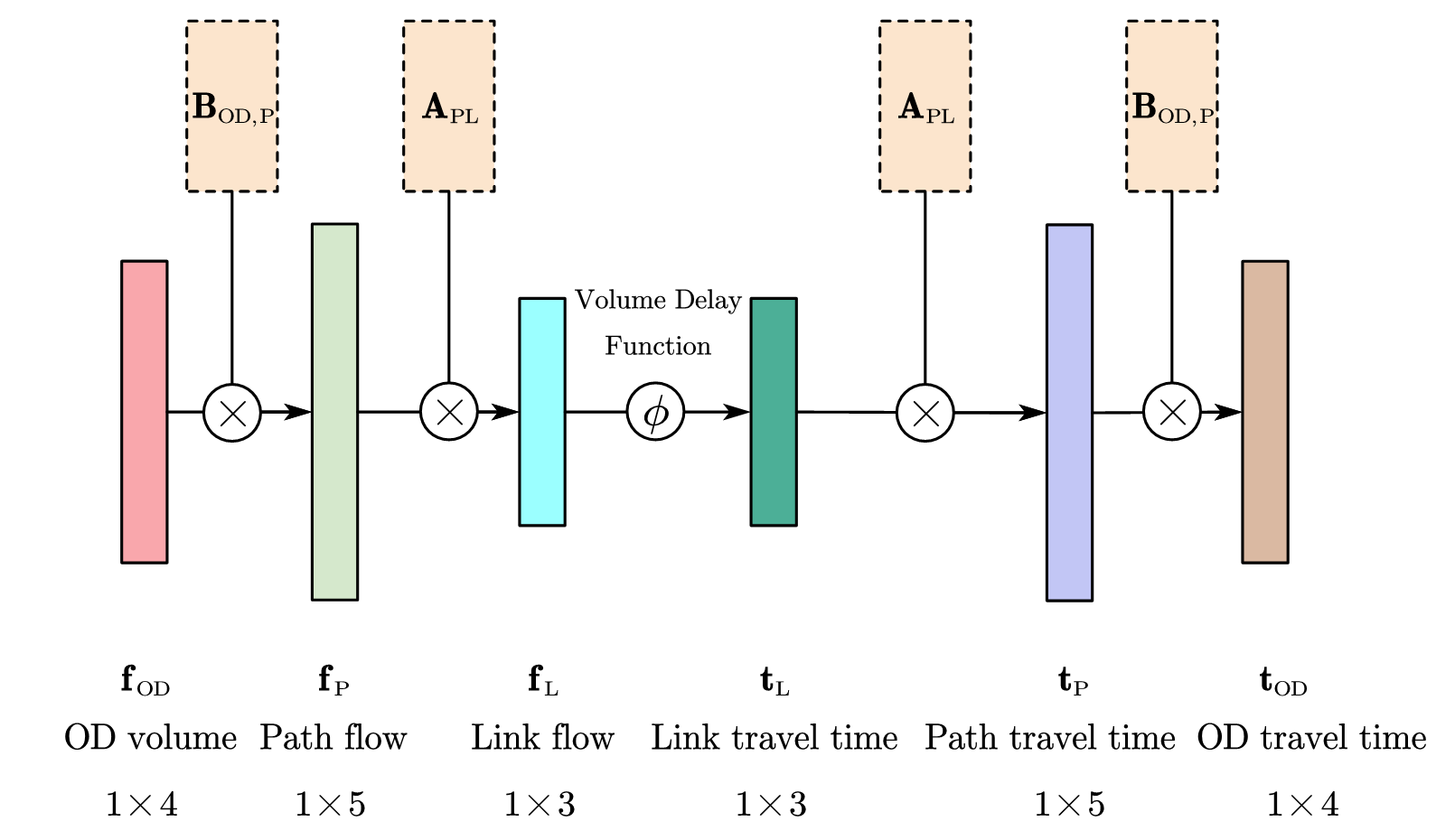}
    \caption{FTT Tensor }
    \label{fig:fft-tensor}
\end{figure}

For our FTT analysis, a fixed set of paths is assumed to be available. However, it can be dynamically adjusted through a process referred to as 'dynamic column generation' in some literature. It is important to note that the detailed mechanism for dynamic column generation is beyond the scope of this paper. Furthermore, one can incorporate a logit model within the FTT framework to update route choice probabilities $\mathbf{B}_{\mathrm{OD},\mathrm{P}}$ based on available path costs or travelers' preferences.

Fig.\ref{fig:enter-label} illustrates how the computational graph operates through an illustrative case. In this case, we know the volumes of certain links in the network, namely links a, b, and c. Those 3 critical links—selected from a total of 8 links in the example—that are strategically located to differentiate between the freeway and the frontage road. We focus on the travel demand for four OD pairs: (1, 3), (1, 4), (2, 3), and (2, 4). The paths for each OD pair are listed in Table \ref{tab:path-set-in-example}.

\begin{figure}[H]
    \centering
    \includegraphics[width=0.75\linewidth]{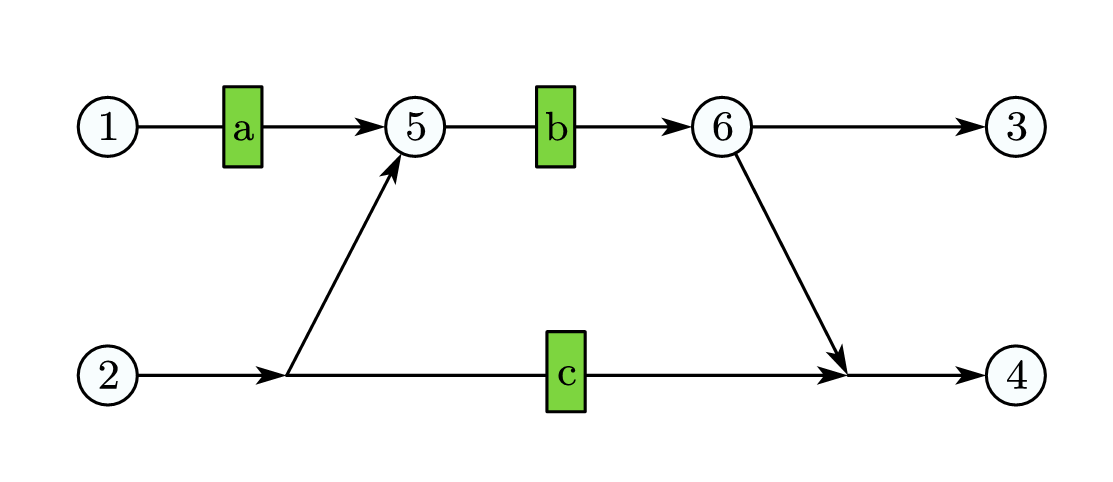}
    \caption{An Illustration Case}
    \label{fig:enter-label}
\end{figure}

\begin{table}[H]
\centering
\caption{Path Set in the Example}
\label{tab:path-set-in-example}
\begin{tabular}{llll}
\hline
\textbf{Path}     & \textbf{OD pair}& \textbf{Traversed Nodes} & \textbf{Traversed links}\\ \hline
$p_1$& (1,3) & 1-5-6-3 & a,b   \\
$p_2$& (1,4) & 1-5-6-4 & a,b   \\
$p_3$& (2,3) & 2-5-6-3 & b     \\
$p_4$& (2,4) & 2-5-6-4 & b     \\
$p_5$& (2,4) & 2-4     & c     \\ \hline
\end{tabular}
\end{table}

Path-to-link incidence matrix $\mathbf{A}_{P,L}$ in the example is represented as Table \ref{tab:path-to-link-incidence-in-example}. Thus we can write $\mathbf{A}_{P,L}$ as 

\begin{equation}
\mathbf{A}_{P,L} = \begin{pmatrix}
	1 & 1 & 0 \\
	1 & 1 & 0 \\
	0 & 1 & 0 \\
	0 & 1 & 0 \\
	0 & 0 & 1
\end{pmatrix}.
\end{equation}

\begin{table}[H]
\centering
\caption{Path-to-Link Relationship in the Example}
\label{tab:path-to-link-incidence-in-example}
\begin{tabular}{llll}
\hline
Path-Link & Link a & Link b & Link c \\ \hline
$p_1$& 1      & 1      & 0      \\
$p_2$& 1      & 1      & 0      \\
$p_3$& 0      & 1      & 0      \\
$p_4$& 0      & 1      & 0      \\
$p_5$& 0      & 0      & 1      \\ \hline
\end{tabular}
\end{table}

Assume the OD-to-path probability $b_{od,p}$ is represented as Table \ref{tab:od-to-path-probability-in-example}. Thus we can write the matrix form of $\mathbf{B}_{OD,P}$.

\begin{table}[H]
\centering
\caption{OD-to-Path Probability in the Example}
\label{tab:od-to-path-probability-in-example}
\begin{tabular}{llllll}
\hline
OD-Path & $p_1$& $p_2$& $p_3$& $p_4$& $p_5$\\ \hline
$(1,3)$    & 1      & 0      & 0   & 0      & 0      \\
$(1,4)$    & 0      & 1      & 0   & 0      & 0     \\
$(2,3)$    & 0      & 0      & 1   & 0      & 0     \\
$(2,4)$    & 0      & 0      & 0   & 0.3      & 0.7     \\  \hline
\end{tabular}
\end{table}

\begin{equation}
\mathbf{B}_{OD,P} =
\begin{pmatrix}
1 & 0 & 0 & 0   & 0 \\
0 & 1 & 0 & 0   & 0 \\
0 & 0 & 1 & 0   & 0 \\
0 & 0 & 0 & 0.3 & 0.7 \\
\end{pmatrix}
\end{equation}

Given the OD volume $\mathbf{f}_{\mathrm{OD}}=(4000, 1000, 2000, 2000)^{\mathsf{T}}$, the path flow volume $\mathbf{f}_P$ can be calculated by:

\begin{equation}
\mathbf{f}_{\mathrm{P}} =
{\mathbf{B}_{\mathrm{OD},\mathrm{P}}}^{\mathsf{T}} \mathbf{f}_{\mathrm{OD}} =
\begin{pmatrix}
1 & 0 & 0 & 0 & 0 \\
0 & 1 & 0 & 0 & 0 \\
0 & 0 & 1 & 0 & 0 \\
0 & 0 & 0 & 0.3 & 0.7 \\
\end{pmatrix}^{\mathsf{T}}
\begin{pmatrix}
4000 \\
1000 \\
2000 \\
2000 \\
\end{pmatrix}
=
\begin{pmatrix}
4000 \\
1000 \\
2000 \\
600 \\
1400 \\
\end{pmatrix}.
\end{equation}

The link flow volume can be calculated by 

\begin{equation}
\mathbf{f}_{\mathrm{L}} =
{\mathbf{A}_{\mathrm{P,L}}}^\mathsf{T} \mathbf{f}_{\mathrm{P}} =
\begin{pmatrix}
1 & 1 & 0 \\
1 & 1 & 0 \\
0 & 1 & 0 \\
0 & 1 & 0 \\
0 & 0 & 1 \\
\end{pmatrix}^{\mathsf{T}}
\begin{pmatrix}
4000 \\
1000 \\
2000 \\
600 \\
1400 \\
\end{pmatrix}
=
\begin{pmatrix}
5000 \\
7600 \\
1400 \\
\end{pmatrix}.
\end{equation}

Assuming the application of the Bureau of Public Roads (BPR) function $\phi(\mathbf{f}_L)$, we subsequently obtain $\mathbf{t}_{\mathrm{L}}=(15, 18, 10)^{\mathsf{T}}$. The next step is to calculate the path travel time $\mathbf{t}_P$ with 

\begin{equation}
\mathbf{t}_{\mathrm{P}} =
\mathbf{A}_{P,L} \mathbf{t}_L =
\begin{pmatrix}
1 & 1 & 0 \\
1 & 1 & 0 \\
0 & 1 & 0 \\
0 & 1 & 0 \\
0 & 0 & 1 \\
\end{pmatrix}
\begin{pmatrix}
15 \\
18 \\
10 \\
\end{pmatrix}
=
\begin{pmatrix}
33 \\
33 \\
18 \\
18 \\
10 \\
\end{pmatrix}.
\end{equation}

The OD travel time $\mathbf{t}_{OD}$ can be calculated by: 
\begin{equation}
\mathbf{t}_{\mathrm{OD}} =
\mathbf{B}_{OD,P} \mathbf{t}_P =
\begin{pmatrix}
1 & 0 & 0 & 0   & 0 \\
0 & 1 & 0 & 0   & 0 \\
0 & 0 & 1 & 0   & 0 \\
0 & 0 & 0 & 0.3 & 0.7 \\
\end{pmatrix}
\begin{pmatrix}
33 \\
33 \\
18 \\
18 \\
10 \\
\end{pmatrix}
=
\begin{pmatrix}
33 \\
33 \\
18 \\
12.4 \\
\end{pmatrix}.
\end{equation}

\subsection{Gradient-Based Network Automatic Differentiation}
Our approach leverages the explicit gradient relationship between OD flows and travel times. We compute the gradient $\frac{\partial \mathbf{t}_{OD}}{\partial \mathbf{f}_{OD}}$ through each mapping in the Flow-Through-Tensor framework:

\begin{align}
\frac{\partial \mathbf{f}_P}{\partial \mathbf{f}_{OD}} &= {\mathbf{B}_{\mathrm{OD},\mathrm{P}}}^{\mathsf{T}} &\text{(OD flows to path flows)}\\
\frac{\partial \mathbf{f}_L}{\partial \mathbf{f}_P} &= {\mathbf{A}_{\mathrm{P},\mathrm{L}}}^{\mathsf{T}} &\text{(Path flows to link flows)}\\
\frac{\partial \mathbf{t}_L}{\partial \mathbf{f}_L} &= \text{diag}\left(t_{\ell}^0 \, \alpha \, \beta \, \frac{1}{C_\ell} \left(\frac{f_{\ell}}{C_\ell}\right)^{\beta-1}\right) &\text{(Link flows to link times)} \label{eq:diag-link-performance} \\
\frac{\partial \mathbf{t}_P}{\partial \mathbf{t}_L} &= \mathbf{A}_{\mathrm{P},\mathrm{L}} &\text{(Link times to path times)}\\
\frac{\partial \mathbf{t}_{OD}}{\partial \mathbf{t}_P} &= \mathbf{B}_{\mathrm{OD},\mathrm{P}} &\text{(Path times to OD times)}
\end{align}

In Eq.(\ref{eq:diag-link-performance}) the matrix is a square diagonal matrix. This is because we assume symmetric cost functions in our traffic assignment problem, which naturally leads to a diagonal Jacobian where each element corresponds to an individual link. Discussions regarding asymmetric cost functions are beyond the scope of this paper.

By the chain rule, the overall sensitivity, which can be used in the OD demand estimation setting, is:
\begin{align}
\frac{\partial \mathbf{t}_{OD}}{\partial \mathbf{f}_{OD}} = \mathbf{B}_{\mathrm{OD},\mathrm{P}} \cdot \mathbf{A}_{\mathrm{P},\mathrm{L}} \cdot \frac{\partial \mathbf{t}_L}{\partial \mathbf{f}_L} \cdot {\mathbf{A}_{\mathrm{P},\mathrm{L}}}^{\mathsf{T}} \cdot {\mathbf{B}_{\mathrm{OD},\mathrm{P}}}^{\mathsf{T}}
\end{align}

The computational graph structure is particularly advantageous for this type of transportation modeling. By representing the network as a differentiable computational graph, we can leverage automatic differentiation to efficiently compute these gradients. This formulation ensures physical consistency with traffic flow theory and provides explicit causal understanding of how demand changes propagate through the network to affect travel times via congestion, representing a fundamental improvement over purely correlational models that may fail to identify underlying mechanisms.

\subsection{Comparison of Computation}

The table below summarizes \textbf{how each approach} (path-based, link-based, FTT) handles the six variables. FTT leverages the same definitions but encodes them as mappings ($\mathbf{B}$ and $\mathbf{A}$), making the entire problem a chain of composite functions:
\begin{table}[H]
\centering
\caption{Comparison of Link-Based, Path-Based, and Flow-Through-Tensor Formulations}
\label{tab:comparison}
\begin{tabular}{p{2.5cm} p{3cm} p{3cm} p{3cm}}
\hline
\textbf{Aspect} & \textbf{Link-Based} & \textbf{Path-Based} & \textbf{FTT} \\
\hline
\textbf{Primary Variables} 
 & Link flows \((f_\ell)\). All-or-nothing (AON) subproblem defines auxiliary solution. 
 & Path flows \((f_p)\). Multiple paths per OD, often updated via reduced gradient. 
 & Layered transformations: \(\mathbf{f}_P \to \mathbf{f}_L \to \mathbf{t}_L \to \mathbf{t}_P\). Maintains \emph{computational graph}. \\
\hline
\textbf{Exact Solution Algorithm}
 & Frank--Wolfe combines current and AON flows via line search. Slow convergence near optimum. 
 & Gradient Projection updates each path flow along negative-gradient. Faster convergence. Manages path set. 
 & Compatible with gradient-based/quasi-Newton solvers. Suitable for GPU/parallel computation. \\
\hline
\textbf{Path Management}
 & No explicit path enumeration. Link flows are main storage.
 & Explicit, growing set of active paths. New shortest paths added as needed. 
 & Column-generation or fixed path set. Emphasis on \emph{computational graph} across layers. \\
\hline
\textbf{Line Search}
 & Merges current link flows and AON flows each iteration.
 & Often direct step in negative-gradient at path level, sometimes with path-flow line search.
 & Compatible with various step-size routines (ADAM, L-BFGS) using chain-rule derivatives. \\
\hline
\textbf{Gradient Computation}
 & Link-level cost linearization each iteration.
 & Reduced gradients for path flows. Shifts flow from higher-cost to lower-cost paths.
 & Layered forward-backward pass applies chain rule. Simplifies gradient evaluation, extends to new cost layers. \\
\hline
\textbf{Features}
 & Minimal storage; simpler coding.\newline
   Path details require extra steps; computationally infeasible for large-scale application.
 & Direct path flow control; faster convergence; easier path results.\newline
   Higher memory for large networks.
 & Compatible with auto-differentiation; supports complex costs.\newline
   May still need path generation for many paths. \\
\hline
\end{tabular}
\end{table}

\subsection{Embedded Optimization and Differentiable Modeling Perspectives}

The FTT structure shares core mathematical principles with neural networks and optimal control, involving layered mappings with forward information flow and backward gradient propagation. These connections are formalized through Lagrangian relaxation and Karush-Kuhn-Tucker (KKT) conditions, where adjoint variables serve a role analogous to backpropagation in deep learning. A detailed treatment, including chain rule derivations and applications to transportation networks, is provided in~\ref{appendixb}. This analysis reveals the fundamental connection between neural network backpropagation, the method of adjoints in optimal control, and the Flow-Through-Tensor framework in transportation networks. All three approaches use Lagrangian duality with a forward pass to compute states and a backward pass to propagate sensitivities via adjoint variables. This unified perspective enables cross-pollination between deep learning advances and transportation network optimization through their common mathematical foundation.

While our work focuses on transportation-specific optimization problems, it draws conceptual alignment with recent advancements in differentiable and embedded optimization. In particular, our approach is consistent with developments that integrate optimization layers within machine learning architectures.

Specifically, our work builds upon the backward propagation framework across different layers of variables introduced in \citet{wu2018hierarchical}, which independently adopted a layered structure conceptually aligned with approaches such as OptNet \citep{amos2017optnet} and differentiable convex optimization layers \citep{agrawal2019diffopt}. These developments, along with advances in program rewriting for convex formulations \citep{agrawal2018rewriting} and end-to-end constrained optimization learning \citep{kotary2021endtoend}, provide foundational insights into embedding optimization problems directly within neural network architectures.

In our case, we embed the transportation system’s internal mappings as a composite sequence of functions: from path flows to link flows, from link flows to travel times, and from travel times back to path costs. This structured formulation ensures both physical interpretability and computational consistency. By embedding these mappings directly into the optimization framework, we enable the model to capture flow propagation dynamics, travel time estimation, and cost evaluation in a cohesive and data-efficient manner.

 In our framework, the transportation system is modeled as a series of interdependent mappings—linking path flows, link flows, travel times, and finally back to path costs. This setup inherently creates computational cycles, akin to a “chicken and egg” scenario, where determining the demand influences the supply and vice versa. These cycles require iterative fixed-point methods to balance the system, where the solution stabilizes once the interactions between demand and supply converge to a fixed point. 

To efficiently resolve these cycles, we leverage a variable-splitting reformulation approach \citep{fisher1997vehicle, niu2018coordinating, kim2024computational}, which dualizes the coupled constraints. This reformulation breaks cyclic dependencies and restores a directed acyclic graph (DAG) structure at the subproblem level, enabling modular decomposition. Furthermore, for more complex and non-differentiable constraints—such as capacity limitations or intermodal competition—we adopt Lagrangian relaxation techniques.

It should be remarked that, while tensors do admit low-rank decompositions, such as CANDECOMP/PARAFAC (CP), Tucker, tensor-train (TT), and tensor-ring (TR) decompositions, these representations introduce non-convex optimization landscapes. This is analogous to matrix factorization, where although matrix completion may appear convex when posed directly over the full matrix X, it becomes non-convex when formulated in terms of its low-rank factors A and B. In both cases, convexity depends on how the optimization problem is posed. We also note that tensor completion is, in general, non-convex and NP-hard. However, tensor methods are specifically designed for modeling high-dimensional, multi-way data. In many practical applications, such data exhibits an underlying low-rank tensor structure. Using tensor decompositions in such settings helps preserve the intrinsic multi-dimensional relationships within the data, which can be critical for performance and interpretability.

\section{Motivation for Multi-Dimensional, Tensor-Based Scheduling}

Transportation network models traditionally focus on single-period or single-day equilibrium analysis, wherein each traveler selects a route to minimize their immediate travel time. While mathematically tractable, these static equilibrium formulations often result in inefficient outcomes due to the use of time-independent cost calculation and decentralized decision-making---encapsulated by the well-known \textit{Price of Anarchy} \citep{KOUTSOUPIAS200965}. Real-world transportation systems, however, are inherently multi-dimensional: demand fluctuates by day and time, user behaviors vary across different travel contexts, and capacity constraints evolve dynamically.

To address these inefficiencies, recent literature identifies a range of coordination strategies. \citet{roughgarden2005selfish} formalized the inefficiencies of selfish routing, motivating mechanisms for improving system performance. Extensions such as bounded rationality \citep{di2014braess} and the impacts of ride-sourcing \citep{beojone2021inefficiency} highlight the real-world complexity of uncoordinated behavior. Reservation-based methods, including tradable mobility credits \citep{yang2011managing,nie2012transaction} and intersection scheduling \citep{dresner2008multiagent}, demonstrate promising demand management solutions. Coordination through equilibrium-based frameworks also shows potential: \citet{papadopoulos2021personalized} propose Pareto-improving freight routing, \citet{ning2023robust} develop a correlated equilibrium routing mechanism, while \citet{luan2024passenger} and \citet{zhang2025bounding} explore rerouting and automation-based scheduling in transit and multi-modal systems. These methods underscore the value of integrating dynamic, multi-dimensional coordination into traffic modeling frameworks.

A tensor-based representation naturally accommodates this complexity. Beyond the basic dimensions of "User Group × Day × Time Period," we can encode a much richer set of attributes within a single multi-dimensional structure including:

\begin{itemize}
    \item \textbf{User Classes:} Different traveler segments (e.g., single-occupant vehicles, high-occupancy vehicles, freight), value of time, utilities, and costs.
    \item \textbf{Vehicle Types:} Distinctions between electric, diesel, and other vehicle technologies
    \item \textbf{Mobility Chain:} Sequences of trips within a day (e.g., home-work-home, home-school-shopping-home)
    \item \textbf{Trip Purposes:} Commute, education, shopping, leisure, and other activity types
    \item \textbf{Time-of-Day Variations:} Morning peak, midday off-peak, evening peak, or finer granularity
    \item \textbf{Day-of-Week Effects:} Weekday versus weekend travel differences
\end{itemize}

With these tensor formulations, one can analytically compare how various multi-period or multi-day rotation strategies improve upon the single-period equilibrium baseline. Crucially, such models enable explicit calculation of cost reductions and Pareto improvements, revealing how partial scheduling or "smart reservations", in which only some portion of travelers coordinate their route or time-of-day choices, yield measurable efficiency gains. These gains scale with the congestion sensitivity of the network (captured by parameters like $\beta$ in volume-delay functions) and the fraction of users participating in the scheduling scheme. Therefore, the proposed methodology can effectively capture the heterogeneity and dynamics of the system's demand and supply.

\subsection{Mathematical Formulation Using Pigou's Network}

We analyze a classic Pigou network with two parallel routes: route a with constant travel time $t_a = 1$, and route b with variable travel time $t_b = x^{\beta}$, where $x$ is the flow on route b. With total demand normalized to 1 unit and $\beta = 1$, the system optimal (SO) solution divides flow equally between routes ($t_{SO} = \frac{3}{4}$), while user equilibrium (UE) concentrates all flow on route $b$ ($t_{UE} = 1$), yielding a Price of Anarchy $\text{PoA} = \frac{t_{UE}}{t_{SO}} = \frac{4}{3}$ that quantifies the inefficiency of uncoordinated routing decisions.
\subsubsection{Multi-Period Tensor Formulation}

To extend the single-period analysis to a multi-period setting, we introduce a rotation mechanism represented by a binary tensor $\mathbf{R} \in \{0,1\}^{I \times D}$, where $I$ denotes the number of user groups and $D$ the number of time periods (e.g., days). The entry $R_{i,d} = 1$ indicates that group $i$ follows the SO assignment on day $d$, while $R_{i,d} = 0$ corresponds to UE behavior.

To illustrate the rotation strategy, we consider a two-day planning horizon. The participating users are divided evenly into two subgroups. Subgroup 1 ($P1$) is assigned to route $a$ on Day 1 and switches to route $b$ on Day 2. Subgroup 2 ($P2$) follows the reverse pattern, using route $b$ on Day 1 and route $a$ on Day 2. The non-participants (NP), who follow UE behavior, consistently choose route $b$ on both days and do not engage in the rotation. 

An illustrative example of the rotation tensor is provided in Table~\ref{tab:rotation-matrix}, where two user groups alternate between SO and UE assignments over a 2-day horizon to promote temporal fairness and load balancing.

\begin{table}[H]
\centering
\caption{Example of Rotation Tensor}
\renewcommand{\arraystretch}{1.2}
\begin{tabular}{>{\bfseries}c|ccccc}
\toprule
\textbf{Group} $\boldsymbol{\backslash}$ \textbf{Day} & 1 & 2 & 3 & 4 \\
\midrule
$R_{1,d}$ & 1 & 0 & 1 & 0 \\
$R_{2,d}$ & 0 & 1 & 0 & 1 \\
\bottomrule
\end{tabular}
\label{tab:rotation-matrix}
\end{table}

The resulting temporal flow pattern for day $d$ is computed as:

\begin{equation}
x^{(d)} = \sum_{i=1}^{I} \left( R_{i,d} \cdot x^{\text{SO}}_i + (1 - R_{i,d}) \cdot x^{\text{UE}}_i \right)
\end{equation}

Here, $x^{\text{SO}}_i$ and $x^{\text{UE}}_i$ denote the flow contributions of group $i$ under system-optimal and user equilibrium behaviors, respectively.

This formulation enables temporal coordination across user groups while preserving long-run fairness and reducing overall system cost as well as calculation cost.

A participation rate $p$ denotes the participation group—users who agree to rotate their route choices across periods. The remaining fraction $1 - p$ consists of non-participants.

Each participating subgroup alternates between route a, which has a constant travel time, and route b, whose travel time depends on the total flow. Table~\ref{tab:rotation-summary} presents each group's route choices, flow assignment, and travel time over two days. 

\begin{table}[H]
\centering
\caption{Flow and Travel Time Tensor over Two Days}
\renewcommand{\arraystretch}{1.4}
\begin{tabular}{>{\bfseries}l >{\bfseries}l c c c}
\toprule
\multicolumn{2}{c}{\textbf{}} & \textbf{P1} & \textbf{P2} & \textbf{NP} \\
\midrule
\multirow{3}{*}{Day 1} & Route       & a & b & b \\
                      & Flow        & $f_a = \frac{p}{2}$ & $f_b = 1 - \frac{p}{2}$ & $f_b = 1 - \frac{p}{2}$ \\
                      & Travel Time & $t_a = 1$ & $t_b = \left(1 - \frac{p}{2} \right)^{\beta}$ & $t_b = \left(1 - \frac{p}{2} \right)^{\beta}$ \\
\midrule
\multirow{3}{*}{Day 2} & Route       & b & a & b \\
                      & Flow        & $f_b = 1 - \frac{p}{2}$ & $f_a = \frac{p}{2}$ & $f_b = 1 - \frac{p}{2}$ \\
                      & Travel Time & $t_b = \left(1 - \frac{p}{2} \right)^{\beta}$ & $t_a = 1$ & $t_b = \left(1 - \frac{p}{2} \right)^{\beta}$ \\
\bottomrule
\end{tabular}

\label{tab:rotation-summary}
\end{table}


\begin{figure}[H]
    \centering
    \includegraphics[width=0.75\linewidth]{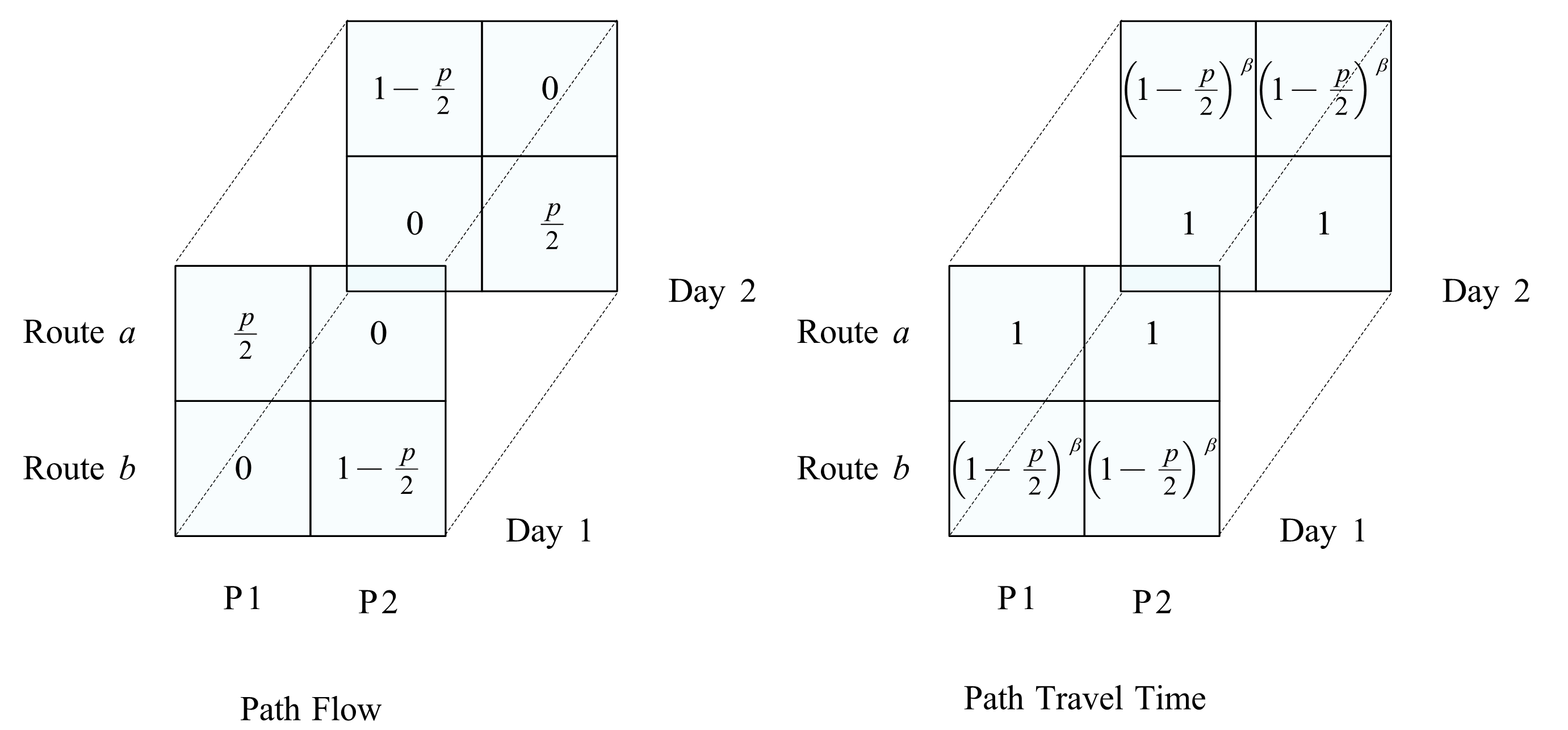}
    \caption{$3^\text{rd}$-order flow and travel time tensor}
    \label{fig:flow-and-travel-time-tensor-over-two-days}
\end{figure}

For participants, we compute the average travel time (a "mode‐average" over the day dimension):
\begin{align}
    \overline{t}_{P} &= \frac{1 + (1 - p/2)^\beta}{2} \quad
\end{align}

For non‐participants, it is simply:
\begin{align}
    \overline{t}_{NP} &= (1 - p/2)^\beta \quad \label{eq:tnp}
\end{align}

Theorem~\ref{thm:pareto} formalizes the key result that coordination through the proposed rotation mechanism leads to Pareto-improving outcomes. This simple strategy achieves two important goals: (1) stable network capacity utilization across days, and (2) improved travel times for all users compared to pure user equilibrium. The detailed proof of this result is provided in ~\ref{appendixc}.

\begin{theorem}
\label{thm:pareto}
The tensor-based rotation with participation rate $p > 0$ and rotation $\alpha = 1/2$ yields a Pareto improvement over pure User Equilibrium for all $\beta \geq 1$, corresponds to decreasing $PoA$.
\end{theorem}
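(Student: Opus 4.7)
The plan is to verify strict Pareto improvement on a group-by-group basis and then reconcile the improved social cost with a decrease in the Price of Anarchy. First, I would recall that under pure UE on this Pigou network, since $x \in [0,1]$ forces $x^{\beta} \leq 1$, route $b$ weakly dominates route $a$, so all demand concentrates on route $b$, every user experiences travel time exactly $1$, and the total system cost equals $1$. This furnishes the benchmark against which the rotation outcome must be compared.

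Next, I would use the rotation travel times already derived in the paper. For non-participants, $\overline{t}_{NP} = (1 - p/2)^{\beta}$; since $p > 0$ gives $1 - p/2 < 1$ and $\beta \geq 1$, the strict inequality $\overline{t}_{NP} < 1$ is immediate. For participants, the time-averaged cost $\overline{t}_{P} = (1 + (1 - p/2)^{\beta})/2 < 1$ reduces to the same inequality $(1 - p/2)^{\beta} < 1$, which again holds. Together these two strict inequalities certify a Pareto improvement in the time-averaged sense, uniformly for every $\beta \geq 1$.

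For the PoA claim, I would form the weighted system cost
\begin{equation*}
C(p) = p\,\overline{t}_{P} + (1-p)\,\overline{t}_{NP},
\end{equation*}
and simplify it to $C(p) = p/2 + (1 - p/2)^{\beta+1}$. The inequality $C(p) < 1 = C(0)$ is equivalent, after factoring out $1 - p/2 > 0$, to $(1 - p/2)^{\beta} < 1$, which holds for every $\beta \geq 1$ and $p \in (0,1]$. Since the system-optimal cost $C_{SO}$ does not depend on $p$, the ratio $C(p)/C_{SO}$ strictly falls below the baseline PoA $=1/C_{SO}$, yielding the claimed decrease.

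The main obstacle is interpretive rather than computational: on the rotation day when a participant travels route $a$, her day-wise travel time equals the UE value of $1$, so pointwise-in-time Pareto dominance fails. I would resolve this by working with the mode-averaged cost over the planning horizon, consistent with the tensor construction in Table~\ref{tab:rotation-summary}, which restores strictness for both groups. A secondary subtlety is that $C(p)$ need not be globally monotone in $p$ for $\beta > 1$ (an interior minimum can appear), but monotonicity is unnecessary; only the bound $C(p) < 1$ for $p \in (0,1]$ is required, and this holds uniformly in $\beta \geq 1$ by the same factoring argument.
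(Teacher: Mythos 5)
Your proposal is correct, and it actually proves more than the paper's own argument does. The paper's proof (Appendix C) computes the same aggregate cost $t_{SO} = \tfrac{p}{2} + \left(1-\tfrac{p}{2}\right)^{\beta+1}$ and the same benefit $\Delta_p = 1 - t_{SO}$, but then establishes positivity only via a first-order Taylor expansion valid for small $p$, concluding $\Delta_p \approx \beta p/2$; it never proves the exact inequality $\Delta_p > 0$ for all $p \in (0,1]$, nor does it separately verify that each group is (weakly) no worse off, which is what a Pareto claim actually requires. Your factoring step --- rewriting $C(p) < 1$ as $\left(1-\tfrac{p}{2}\right)^{\beta+1} < 1-\tfrac{p}{2}$ and dividing by $1-\tfrac{p}{2} > 0$ to reduce to $\left(1-\tfrac{p}{2}\right)^{\beta} < 1$ --- closes that gap exactly and uniformly in $p$ and $\beta$ (indeed for any $\beta > 0$, not just $\beta \geq 1$). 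Your group-by-group comparison of $\overline{t}_{P}$ and $\overline{t}_{NP}$ against the UE baseline of $1$ is also the right way to substantiate the Pareto language, and your observation that day-wise strict dominance fails for participants on their route-$a$ day (so the improvement must be read in the horizon-averaged sense, with weak dominance holding pointwise) is a genuine subtlety the paper does not address. The remark that $C(p)$ can have an interior minimum for $\beta > 1$, and that only the bound $C(p) < 1$ rather than monotonicity is needed, is likewise correct and worth keeping. In short: same decomposition and same cost formula, but your exact algebraic argument replaces the paper's local approximation and supplies the missing per-group verification; the paper's Taylor step buys only the additional qualitative insight that the benefit scales like $\beta p/2$ for small participation rates, which you could append as a corollary rather than as the core of the proof.
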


We note that while our framework illustrates benchmark improvements under the rotation/smart reservation scenario, the specific methods for achieving optimized route or scheduling assignments for limited road resources—such as through mobility credits or full trajectory scheduling of automated vehicles—are beyond the scope of this paper. Our goal is to provide clear benchmarks and insights, leaving detailed implementations of these approaches for future research.

\subsection{Multi-Dimensional Tensor Representation and Management Strategies}

It is important to clarify that while we introduce a multi-dimensional tensor representation to capture the rich dynamics of transportation systems, we do not provide detailed implementations of these management strategies in this paper. Our discussion here is intended as a conceptual exploration---an invitation for further research---rather than a fully developed operational framework.

In practice, many metropolitan areas, such as Beijing with its day-of-week license plate restrictions, already employ rudimentary forms of multi-dimensional traffic management. Similarly, more dynamic approaches---such as automated vehicle scheduling---could benefit from the tensor-based mechanisms we propose. Our aim is to promote the use of tensors in transportation modeling even as we recognize that the detailed design of such strategies remains an open area for future work.

We begin with a generalized multi-dimensional tensor representation:
\begin{equation}
\mathcal{X}(\text{class}, \text{vehicle}, \text{tour}, \text{purpose}, \text{tod}, \text{dow}, \text{route}, \text{day}),
\label{eq:tensor}
\end{equation}
which captures a rich set of attributes beyond traditional dimensions.

Tensor decomposition methods allow us to extract latent travel patterns from this high-dimensional data. For example, using CP decomposition, we express $\mathcal{X}$ as:
\begin{equation}
\mathcal{X} \approx \sum_{r=1}^{R} \lambda_r \cdot \mathbf{a}^{(1)}_r \circ \mathbf{a}^{(2)}_r \circ \cdots \circ \mathbf{a}^{(N)}_r,
\label{eq:cp}
\end{equation}
where each component $r$ corresponds to a distinct travel cluster and $\lambda_r$ indicates its relative prominence. In a weekday morning peak scenario, CP decomposition might reveal:
\begin{equation}
\begin{aligned}
\lambda_1 &= 0.50 \quad &\text{(Work commuters in single-occupancy vehicles)},\\[1mm]
\lambda_2 &= 0.30 \quad &\text{(School drop-offs)},\\[1mm]
\lambda_3 &= 0.20 \quad &\text{(Medical or personal appointments)}.
\end{aligned}
\label{eq:lambda}
\end{equation}

Based on these latent components, we introduce a factor-guided rotation tensor $\mathbf{R}$ that tailors interventions according to the identified patterns:
\begin{equation}
\mathbf{R}_{i,d} = 
\begin{cases}
    \text{Pattern A} & \text{if user } i \text{ aligns with Component 1},\\[1mm]
    \text{Pattern B} & \text{if user } i \text{ aligns with Component 2},\\[1mm]
    \text{Pattern C} & \text{if user } i \text{ aligns with Component 3}.
\end{cases}
\label{eq:rotation_tensor}
\end{equation}

To design a practical scheduling mechanism, we extend our rotation scheme over a longer consultation period---say, a five-day workweek or even a multi-week cycle (e.g., four weeks per month). Suppose the overall participation rate is \(p = 0.10\) (10\%), but note that this rate may vary by latent component. For example, work commuters (Component 1) may have less flexibility (\(p_1 < 0.10\)), while school drop-offs (Component 2) could be more amenable to adopting system-optimal (SO) assignments (\(p_2 \approx 0.10\)).

We then define a day-specific flow for each component using a rotation schedule. Let \(R_{r,d}\) indicate whether the system-optimal assignment is active for component \(r\) on day \(d\). The aggregated flow on day \(d\) is given by:
\begin{equation}
x^{(d)} = \sum_{r=1}^{R} \left( R_{r,d} \cdot x^{\text{SO}}_{r} + \left(1 - R_{r,d}\right) \cdot x^{\text{UE}}_{r} \right),
\label{eq:day_flow}
\end{equation}
where \(x^{\text{SO}}_{r}\) and \(x^{\text{UE}}_{r}\) represent the flows under system-optimal and user equilibrium conditions for component \(r\), respectively.

For a concrete example, consider a four-week cycle with five working days each week (\(D = 20\)). For school drop-offs (Component 2), we might define the rotation schedule as:
\begin{equation}
R_{2,d} = 
\begin{cases}
1, & \text{if } d \in \{1, 6, 11, 16\}, \\
0, & \text{otherwise}.
\end{cases}
\label{eq:rotation_schedule}
\end{equation}
This means that on the first day of each week, the designated 10\% of school drop-offs adopt the system-optimal assignment---even if it means a non-preferred arrival time---while the remaining days follow the user equilibrium behavior. Different rotation schedules can be devised for other components based on their inherent flexibility.

For additional flexibility, Tucker decomposition provides a complementary approach:
\begin{equation}
\mathcal{X} \approx \mathcal{G} \times_1 \mathbf{A}^{(1)} \times_2 \mathbf{A}^{(2)} \cdots \times_N \mathbf{A}^{(N)},
\label{eq:tucker}
\end{equation}
where the core tensor \(\mathcal{G}\) captures higher-order interactions across dimensions, such as the joint influence of vehicle type and time-of-day on congestion.

In summary, while our primary contributions focus on tensor-based scheduling for network coordination, the multi-dimensional tensor framework and its associated management strategies presented here are intended as conceptual benchmarks. We recognize that the detailed mechanisms for extracting actionable insights and implementing these strategies---especially over extended time horizons with differential participation rates---are beyond the scope of this paper. Nonetheless, we believe that these ideas demonstrate the potential of tensor-based approaches and encourage further research in this promising direction.

\section{ADMM Framework for Modular Integration in Transportation Systems}

\subsection{Background and Applications in Transportation Modeling}
Modern transportation systems combine modular components—each with internal logic, decision variables, and objectives. Examples include data-driven OD estimation, physics-informed flow propagation (e.g., Flow-Through Tensor), and vehicle assignment systems. To preserve modularity while enforcing cross-module consistency, we deploy the Alternating Direction Method of Multipliers (ADMM), first introduced by \citet{gabay1976dual}.

The ADMM-based decomposition approach has been widely applied to transportation problems, including traffic assignment \citep{kim2024computational,LIU2023103233, ZHANG2024novel}, dynamic optimal transport \citep{CHI2024Improved}, and optimizing intercity express transportation networks \citep{DONG2025Gradient}. Earlier applications include vehicle routing problems with time windows \citep{yao2019admm}, cyclic train timetabling \citep{zhang2019solving}, and integrated vehicle assignment and routing for shared mobility \citep{liu2020integrated}. Recent work by \citet{chen2025enhancing} has also applied similar approaches to enhancing high-speed railway timetable resilience.

\subsection{Mathematical Formulation and Algorithm}
The ADMM approach enables decomposable optimization while enforcing physical and behavioral constraints. The general mathematical formulation is as follows:
\begin{align}
\min_{\mathbf{x}, \mathbf{z}} \quad & \mathcal{J}_1(\mathbf{x}) + \mathcal{J}_2(\mathbf{z}) \\
\text{subject to} \quad & \mathbf{C}\mathbf{x}_S + \mathbf{D}\mathbf{z}_S = \mathbf{b},\\[6pt]
& \mathbf{x} = [\mathbf{x}_S, \mathbf{x}_{NS}], \\[4pt]
& \mathbf{z} = [\mathbf{z}_S, \mathbf{z}_{NS}],
\end{align}
where:
\begin{itemize}
\item \textbf{Decision Variables:}
\begin{itemize}
\item $\mathbf{x} = [\mathbf{x}_S, \mathbf{x}_{NS}]$: Complete decision variables in Module 1, explicitly partitioned into shared ($\mathbf{x}_S$) and non-shared ($\mathbf{x}_{NS}$) subsets.
\item $\mathbf{z} = [\mathbf{z}_S, \mathbf{z}_{NS}]$: Complete decision variables in Module 2, similarly partitioned.
\end{itemize}
\item \textbf{Objective Functions:}
\begin{itemize}
\item $\mathcal{J}_1(\mathbf{x})$, $\mathcal{J}_2(\mathbf{z})$: Module-specific objectives, dependent on their respective full sets of variables.
\end{itemize}
\item \textbf{Coupling Constraint:}
\begin{itemize}
\item Matrices $\mathbf{C}$ and $\mathbf{D}$ and vector $\mathbf{b}$ enforce consistency explicitly through the shared variables ($\mathbf{x}_S$, $\mathbf{z}_S$).
\end{itemize}
\end{itemize}
\textbf{Remark 1:} The bent variables refer to how variables are handled across different subproblems or components of a larger optimization problem, especially in decentralized or distributed optimization settings. Here, the coupling variable sets $\mathbf{x}_S$ and $\mathbf{z}_S$ represent interrelated quantities such as link-level vehicle flow volume versus passenger link flow volume. These coupling values can be expressed as link performance functions. The complete variable sets $\mathbf{x}$ and $\mathbf{z}$ include flow-through tensor variables across OD, path, and link layers.

Coupling constraints in transportation networks serve as critical interconnection mechanisms between passenger flows and vehicle operations, forming the backbone of what can be conceptualized as a flow-through tensor framework. These constraints can be mathematically formulated through various approaches, including Lagrangian relaxation techniques that dualize the coupling relationships, thereby enabling decomposition into more tractable subproblems \citep{palomar2006tutorial, chiang2007layering}. The coupling functions themselves represent sophisticated demand-supply interactions that extend beyond traditional Bureau of Public Roads (BPR) formulations of volume-to-capacity ratios. More nuanced formulations include those that capture passenger-vehicle capacity matching \citep{niu2018coordinating}, where constraints such as $F_{\text{passenger}} \leq S \cdot F_{\text{vehicle}}$ ensure that passenger flows remain within the bounds of available vehicle capacity. Additionally, multi-commodity flow approaches treat passengers and vehicles as distinct commodities sharing network resources, with constraints that limit their weighted sum on each link \citep{fisher1997vehicle}.

In multi-modal systems, this two-block tensor structure accommodates:
\begin{enumerate}
\item \textbf{Temporal Synchronization Coupling}, where the time dimensions of passenger and vehicle tensors must align to minimize transfer times \citep{ceder2001creating}, with frequency coordination constraints like $f_i = k \cdot f_j$ linking the temporal patterns across tensor blocks \citep{nesheli2014optimal};
\item \textbf{Spatial Resource Allocation Coupling}, where the spatial dimensions of both tensor blocks share limited infrastructure resources, expressed as $\sum_m (\alpha_m \cdot F_m) \leq C$, with $\alpha_m$ representing how each tensor block consumes shared capacity \citep{yang2004multi}.
\end{enumerate}

Coupling Function can also be modeled as a Performance Function for Demand-Supply Interaction
The typical forms include: (1) Ratio-based forms ($D/C$): As in the BPR function; (2) Difference-based forms (D-C): These measure the gap between demand and supply capacity; (3) Queue-based formulations: Performance functions can explicitly capture dynamic queueing phenomena, including vertical and horizontal queue propagation, spillback effects, and time-dependent delays, providing more realistic congestion representation in scenarios with time-varying demand patterns. Our current implementation supports analytical approximations of queueing behavior but does not yet include full mesoscopic simulation such as DTALite \citep{zhou2014dtalite}. However, the modular architecture of the tensor framework allows for future incorporation of more detailed traffic loading schemes, including simulation-based approaches.

These coupling mechanisms ensure that solutions optimized for each tensor block individually will maintain feasibility and efficiency when operating as an integrated system. Moreover, the framework’s elegance lies in its dual approach: ML captures demand variations, while FTT captures both demand and supply distributions.
This modularity aligns with the FTT framework's layered mappings---e.g., from $\mathcal{F}_{OD} \rightarrow \mathcal{F}_P \rightarrow \mathcal{F}_L \rightarrow \mathcal{T}_L \rightarrow \mathcal{T}_{OD}$---enabling seamless coordination between independently updated modules.

The ADMM algorithm proceeds iteratively through three steps:
\begin{align}
\mathbf{x}^{k+1} &:= \arg\min_{\mathbf{x}} \mathcal{J}_1(\mathbf{x}) + (\boldsymbol{\lambda}^k)^\top (\mathbf{C} \mathbf{x}_S + \mathbf{D} \mathbf{z}_S^k - \mathbf{b}) + \frac{\rho}{2} \|\mathbf{C} \mathbf{x}_S + \mathbf{D} \mathbf{z}_S^k - \mathbf{b}\|_2^2 \\
\mathbf{z}^{k+1} &:= \arg\min_{\mathbf{z}} \mathcal{J}_2(\mathbf{z}) + (\boldsymbol{\lambda}^k)^\top (\mathbf{C} \mathbf{x}_S^{k+1} + \mathbf{D} \mathbf{z}_S- \mathbf{b}) + \frac{\rho}{2} \|\mathbf{C} \mathbf{x}_S^{k+1} + \mathbf{D} \mathbf{z}_S - \mathbf{b}\|_2^2 \\
\boldsymbol{\lambda}^{k+1} &:= \boldsymbol{\lambda}^k + \rho \left( \mathbf{C} \mathbf{x}_S^{k+1} + \mathbf{D} \mathbf{z}_S^{k+1} - \mathbf{b} \right)
\end{align}where $\rho > 0$ is the penalty parameter and $\boldsymbol{\lambda}$ is the dual variable.
This setup ensures:
\begin{itemize}
\item Each subproblem updates its \textbf{entire variable set} $\mathbf{x}$ or $\mathbf{z}$.
\item Only the \textbf{bent components} $\mathbf{x}_S$, $\mathbf{z}_S$ are used in the coupling term, which keep consistency across the calculation while decouple the framework structure. 
\end{itemize}

\textbf{Remark 2:} The multiplier minimization and alternating process is naturally suitable for GPU-based computational acceleration in deep learning applications. This alternating structure facilitates efficient parallel computation on modern hardware architectures. ~\cite {Kim} proposed a sample implementation of this approach. For further details, readers may refer to \cite{kim2024computational}, while the problem decomposition and Lagrangian-based iterative process requires explicit coding.

\textbf{Remark 3:} While enabling ADMM on parallel computing platforms is technically emerging areas of research, it is beyond the scope of this paper. Interested readers can explore related work such as \cite{LIU2023103233}

\textbf{Remark 4:} Regarding an important question of whether flow models can be extended to traditional applications with strong 0-1 constraints (e.g., resource-constrained shortest path problems): such extensions typically require a two-phase approach. First, continuous flow variables should be treated as a continuous relaxation problem with floating-point variables. Subsequently, these solutions can be converted to integer or binary variables through projection or rounding techniques, employing specialized algorithms such as branch-and-bound or cutting plane methods. While the integration of continuous and discrete mappings lies outside the scope of this paper, interested readers can refer to \cite{yao2019admm, liu2020integrated} for comprehensive treatments of these techniques.

\subsection{Transportation Applications of ADMM: Passenger-Vehicle Coupling}

Transportation systems involve coordination between passenger flows and vehicle movements. We decompose the system into two primary blocks:

\paragraph{Passenger Block}
The Passenger Block manages flows and travel times through flow tensors ($\mathcal{F}^\text{Passenger}_{OD}$, $\mathcal{F}^\text{Passenger}_{P}$, and $\mathcal{F}^\text{Passenger}_{L}$) and their corresponding travel time tensors ($\mathcal{T}^\text{Passenger}_{OD}$, $\mathcal{T}^\text{Passenger}_{P}$, and $\mathcal{T}^\text{Passenger}_{L}$). 

\paragraph{Vehicle Block}
The Vehicle Block tracks vehicle flows, travel times, and costs through flow tensors ($\mathcal{F}^\text{Vehicle}_{OD}$, $\mathcal{F}^\text{Vehicle}_{P}$, $\mathcal{F}^\text{Vehicle}_{L}$), travel time tensors ($\mathcal{T}^\text{Vehicle}_{OD}$, $\mathcal{T}^\text{Vehicle}_{P}$, $\mathcal{T}^\text{Vehicle}_{L}$), and cost tensors ($\mathcal{C}^\text{Vehicle}_{P}$, $\mathcal{C}^\text{Vehicle}_{L}$). 

\begin{figure}[H]
    \centering
    \includegraphics[width=1\linewidth]{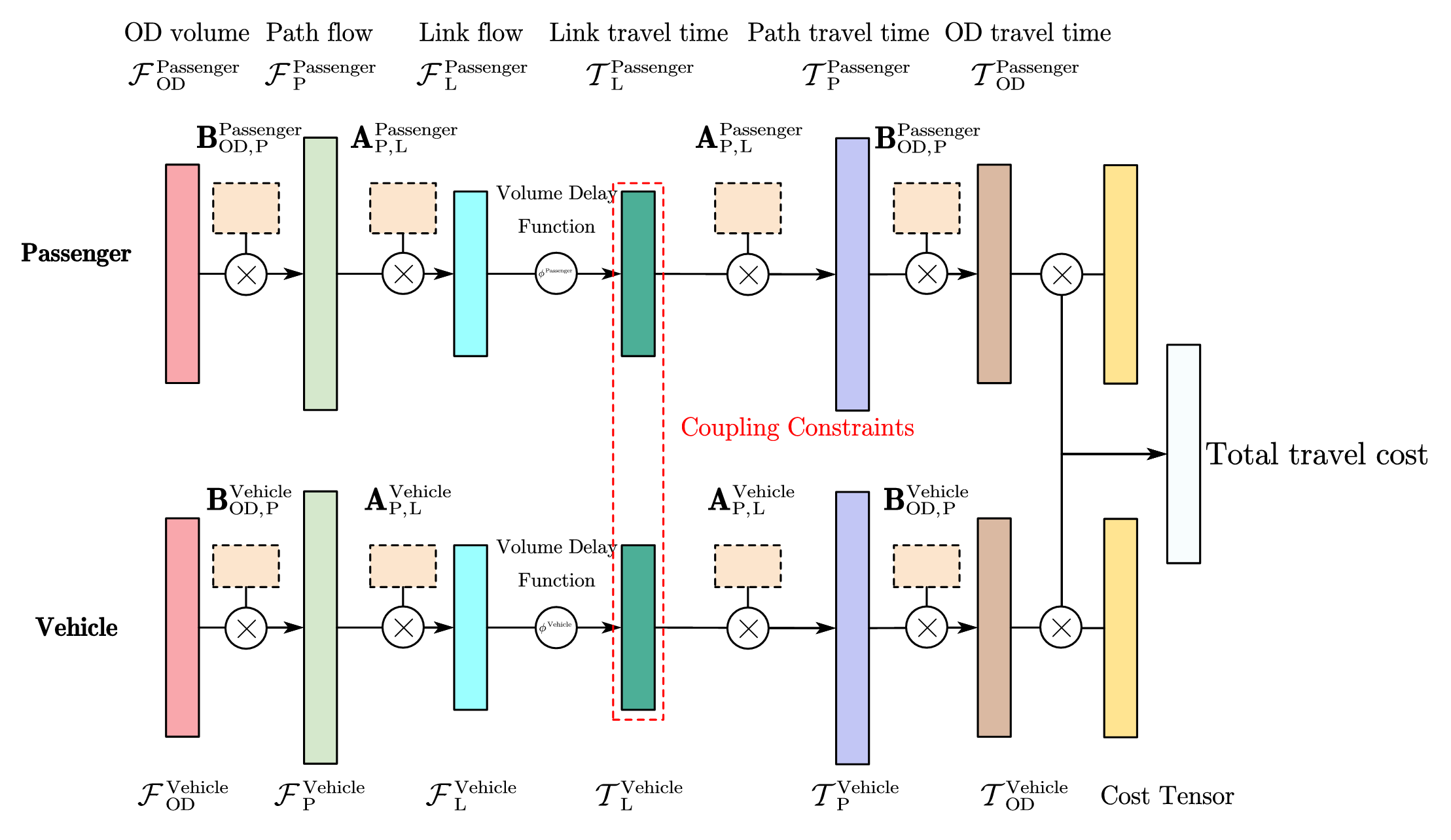}
    \caption{Coupling of Passenger and Vehicle Tensors}
    \label{fig:coupling-of-passenger-and-vehicle-tensors}

\end{figure}

The coupling constraint enforces vehicle capacity limitations:
\begin{equation}
\mathcal{F}^\text{Passenger}_{L} \leq \boldsymbol{\omega} \odot \mathcal{F}^\text{Vehicle}_{L} \quad
\end{equation}

where $\boldsymbol{\omega}$ is the vector of occupancy rates (passengers per vehicle) for each link/mode, $\odot$ denotes element-wise multiplication.

Each block optimizes its own objective:
\begin{align}
\text{Passenger Objective:} \quad & \mathcal{J}_1(\mathbf{x}) = \sum_{od} \mathcal{F}^\text{Passenger}_{OD} \cdot \mathcal{T}^\text{Passenger}_{OD,P} \\
\text{Vehicle Objective:} \quad & \mathcal{J}_2(\mathbf{z}) = \sum_{\ell} \mathcal{F}^\text{Vehicle}_{L} \cdot \mathcal{C}^\text{Vehicle}_{L}
\end{align}

The ADMM framework allows each system to optimize independently while ensuring the physical constraint that passengers cannot exceed vehicle capacities. Classical ADMM ensures convergence under convex and separable conditions. For more complex, nonconvex scenarios, convergence analysis is beyond the scope of this paper. Our formulation employs block-wise updates with regularization to support practical implementation.

\subsection{Integrated ML-FTT Framework}
To illustrate the potential for integration between the FTT framework and various ML approaches, Table \ref{tab:ml_ftt_integration} summarizes different ML methods that can be integrated with FTT elements through the ADMM framework. This integration leverages the strengths of both approaches, the physical consistency and interpretability of FTT, combined with the data-driven insights from ML models, with the underlying optimization structure and backpropagation formulation detailed in ~\ref{appendixb}.

\begin{table}[htbp]
\centering
\caption{ML Methods for Transportation Network Elements and FTT Integration; NNs: Neural Networks, GNNs: Graph Neural Networks, DL: Deep Learning, GP: Gaussian Process.}
\renewcommand{\arraystretch}{1.1}
\resizebox{\textwidth}{!}{%
\begin{tabular}{>{\bfseries}p{2cm} p{2.2cm} p{6.7cm} p{2cm}}
\toprule
\textbf{Modeling Element} & \textbf{ML Methods} & \textbf{Research Focus} & \textbf{FTT Application} \\
\midrule
OD Demand/Flow/Pattern & GCN; LSTM; Transformer; Bayesian Learning & Spatio-temporal and residual networks modeled OD patterns \citep{zhang2017deep,chu2019deep,xu2024space}, extended by graph-based methods \citep{wang2019origin,shi2020predicting,zheng2022short}, transformers and ML for inflow prediction \citep{hu2023high,hu2023examining}, simulation with Bayesian optimization \citep{huo2023simulation}, and heterogeneous graphs for direct-transfer flows \citep{tang2024origin}. & $\mathcal{F}_{OD}$ tensor with $\mathbf{B}$ mapping matrix \\
\hline
Path/Route Flow & Deep RL; GNNs; Multi-task Learning & Multi-task deep models integrated user-POI features \citep{huang2020multi}, later enhanced by environment-aware RL for adaptive mobility \citep{guo2021route}. & $\mathcal{F}_{P}$ enables differentiable path selection \\
\hline
Link/Cell Flow & Spatio-temporal GNNs; Tensor Models; Gaussian Processes & Uncertainty captured via entropy \citep{liu2022analysis}, link capacity estimated using BPR-informed DL \citep{huo2022quantify}, improved GPs for prediction and scalability \citep{liu2022gaussian,liu2023gaussian}, reinforced dynamic GCNs for imputation and prediction \citep{chen2022novel}, data fusion for missing states \citep{xing2022traffic}, and transfer learning for low-coverage networks \citep{zhang2023full}. & $\mathcal{F}_{L}$ with $\mathbf{A}_P$ preserves flow conservation \\
\hline
Link/Cell Travel Time and Traffic State & Graph Learning; Ensemble Models; Tensor Decomposition & Spatial-temporal dependencies modeled via GNNs \citep{zhao2019t,guo2019attention}, adaptive structures without predefined graphs \citep{bai2020adaptive}, GPS-based speed estimation via matrix completion \citep{yu2020urban}, grid-based traffic state prediction using deep spatio-temporal models \citep{liu2021deeptsp}, GPs, DL, entropy, and fusion techniques for scalable link-level inference \citep{zhang2023full,liu2022gaussian,liu2023gaussian,huo2022quantify,liu2022analysis,xing2022traffic}. & $\mathcal{T}_{L}$ maps flows to travel times via gradient matrices \\
\hline
Path/Route Travel Time & Graph NNs; Tensor-based NN; Deep Sequence Models & Modeled using segmented ML \citep{bahuleyan2017arterial}, deep spatio-temporal networks \citep{wang2018will,shen2020ttpnet}, real-time bus data \citep{ma2019bus}, production-level GNNs \citep{derrow2021eta} and physics-informed Transformers for trajectory estimation and prediction \citep{geng2023physics, long2024physics}. & $\mathcal{T}_{P}$ with $\mathbf{A}^{\top}\mathcal{T}_{L}$ aggregates link times \\
\hline
OD/Trip Travel Time & IRL; Transformer; Oracle Learning & Addressed via ConvLSTM hybrids \citep{duan2019prediction}, diffusion-transformer frameworks \citep{lin2023origin}, and personalized IRL-based models \citep{liu2025personalized}. & $\mathcal{T}_{OD}$ with $\mathbf{B}^{\top}\mathcal{T}_{P}$ \\
\bottomrule
\end{tabular}%
}
\label{tab:ml_ftt_integration}
\end{table}

In transportation networks, ADMM can be used to enable effective coordination between machine learning-based OD estimation and flow-through tensor-based traffic assignment. This coupling creates a bridge between data-driven approaches and physics-based modeling principles.

As a conceptual discussion, we present the key concepts below:

\begin{itemize}
    \item ML modules that estimate OD flows from available data.
    
    \item FTT modules that handle traffic assignment using these flows.
    
    \item Coupling constraints that ensure consistency between these two perspectives.
\end{itemize}

The ML component minimizes the difference between estimated and observed OD flows while incorporating appropriate regularization. Simultaneously, the FTT component optimizes traffic assignment while respecting physical network constraints and observed travel times.

For deep learning models, such as convolutional neural networks (CNNs) used in OD estimation, the ADMM framework modifies the loss function to include penalty terms that enforce consistency with the FTT-derived flows. This approach preserves the strengths of both paradigms—the data adaptation capabilities of ML and the physical interpretability of FTT.

By integrating ADMM with the FTT framework and modern deep learning approaches, we create an architecture that respects physical constraints while allowing each module to evolve and optimize independently. This framework is flexible, scalable, and generalizable to other coordination problems in transportation systems, including pricing, demand shaping, multimodal integration, and disruption management.
\section{Implementation Details: Open-Source Implementation Resources}
\label{sec:implementation}
The FTT framework has been implemented through a series of open-source repositories that address different aspects of transportation modeling. These implementations provide concrete evidence of the framework's computational efficiency and scalability for large-scale networks.

The tensor-based computational architecture has been realized through several complementary GitHub repositories, organized into three functional categories:

\subsubsection*{Foundation Components}
\begin{itemize}
    \item \textbf{CG Network Model}~\citep{CGNetworkModel}: Core network modeling components of the FTT framework.
    
    \item \textbf{CG Choice Model}~\citep{CGChoiceModel}: Computational graph-based discrete choice models for efficient handling of discrete choice probabilities.
\end{itemize}

\subsubsection*{Integration Mechanisms}
\begin{itemize}
    \item \textbf{CG Equilibrium}~\citep{Kim}: Mathematical integration of travel demand and traffic network models through ADMM techniques.
    
    \item \textbf{Traffic State Estimation}~\citep{Lu}: Implementation of traffic state estimation using computational graphs.
\end{itemize}

\subsubsection*{Applications and Planning Tools}
\begin{itemize}
    \item \textbf{CG-Based Transportation Planning Models}~\citep{CGTPM}: Open-source Python codes for digitizing traditional transportation planning models through computational graphs.
    
    \item \textbf{TCGlite}~\citep{TCGlite}: Big data driven computational graph solutions for education and academic research.
\end{itemize}

\begin{figure}[htbp]
    \centering
    \includegraphics[
        trim=8cm 0cm 3cm 0cm,
        clip,
        width=1.3\textwidth]{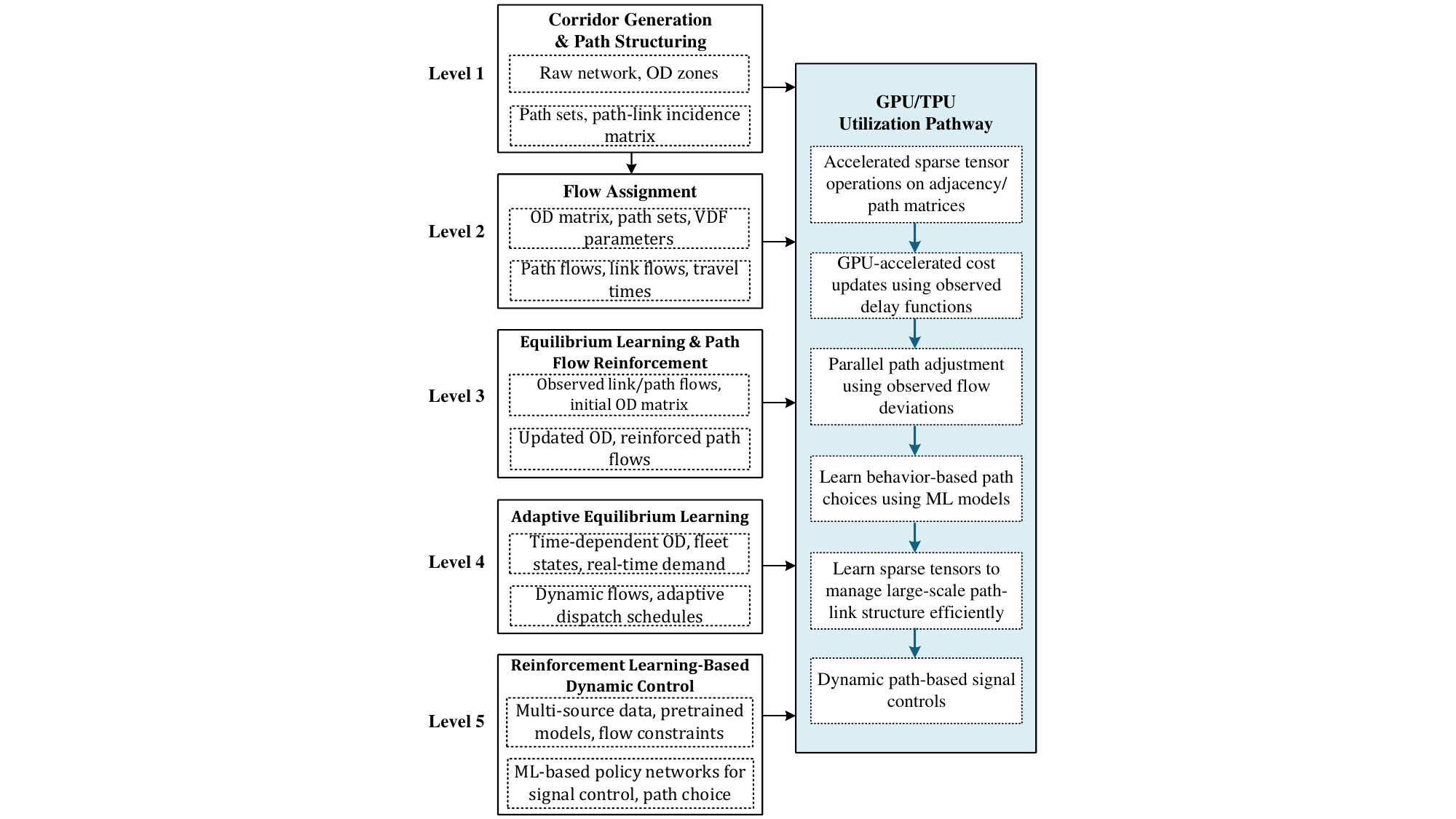}
    \caption{Modular Pipeline of the Tensor-based Flow Regulation Framework}
    \label{fig:tensor_pipeline}
\end{figure}

Figure~\ref{fig:tensor_pipeline} presents a modular pipeline outlining the tensor-based architecture, offering a unified framework view. It illustrates a level-wise progression from corridor generation to ML-driven dynamic control, with each stage corresponding to key modules such as flow assignment and equilibrium learning, supported by the open-source tools described above. The figure connects the system’s software components with its theoretical and computational foundations. While our framework incorporates traffic flow rules and behavior models, it is not a full physics informed reinforcement learning approach. Rather, it serves as a model-based environment that enables structured learning grounded in observed traffic dynamics and system constraints.

All implementations follow the General Modeling Network Specification (GMNS) standards \citep{lu2023virtual}, with networks generated from OpenStreetMap data using the \texttt{osm2gmns} tool \citep{osm2gmns}. This standardization ensures interoperability across different modules and enables seamless application to real-world transportation networks. The framework incorporates several advanced algorithmic techniques for computational efficiency. Column generation principles, as described in \citet{liu2020integrated}, are leveraged for efficient path-based solution approaches, supported by the Path4GMNS 
library \citep{Path4GMNS} which provides specialized algorithms for shortest path finding and column generation in GMNS-formatted networks. For rapid evaluation and calibration of large-scale networks, the framework integrates with DTALite, a queue-based mesoscopic traffic simulator available as a PyPI package \citep{DTALite}, making it easily accessible for incorporation with the tensor-based framework. Additionally, the implementation employs cross-block coordination through ADMM to support efficient multi-modal and multi-operator optimization, with a practical implementation available in the AVRLite repository \citep{AVRLite} that provides specialized algorithms for autonomous vehicle routing with decomposition techniques.

Tensor-based implementations using TensorFlow or PyTorch have proven remarkably effective in estimating discrete choice models for traffic network analysis, particularly by leveraging automated differentiation within computational graphs. This approach not only provides reliable numerical accuracy for gradient-based methods but also demonstrates significant gains in handling large behavioral datasets. In a multinomial logit model with 88 parameters reported by \citet{CGChoiceModel}, the tensor-based approach completed in just 8.6 seconds compared to alternative methods requiring 7-9 minutes. More impressively, a nested logit model with 89 parameters converged in about 30 seconds, outperforming other approaches that took over 15 minutes. These improvements stem from using automatic differentiation within computational graphs, while alternative approaches use chain rule or numerical derivative techniques such as Richardson extrapolation—all with the same log-likelihood objective function and BFGS optimization algorithm.
Beyond choice modeling, the framework excels in traffic assignment scenarios. In the study by \citep{guarda2024estimating}, a case study on a Fresno network with 1,078 nodes, 6,000 OD pairs, and an 18,000-path set achieved convergence in approximately 600 seconds over 100 episodes using tensor-based implementation with automatic differentiation, resulting in a relative gap within a percent. Meanwhile, research applying end-to-end learning of user equilibrium with implicit neural networks \citep{liu2023end} to a Chicago sketch network (933 nodes) reported CPU running times between 1,000 and 4,000 seconds.
It is worth noting, however, that current implementations in the above studies have not fully explored GPU utilization, suggesting room for further speed gains by leveraging parallel hardware acceleration. Additionally, various configurations of initial values, path flows, and other model inputs can significantly influence performance. This suggests that hybrid approaches—such as using efficient lower-level C++ implementations for OD flows and path assignment alongside tensor-based methods—may be equally viable for certain applications. While Python libraries like TensorFlow already utilize lower-level C++ libraries internally, purely tensor-based methods may not always be the optimal choice. Combining or customizing C++ modules for baseline flows with computational graph frameworks can maintain robustness while preserving the computational efficiency benefits demonstrated in large-scale network applications.

To improve GPU utilization, one could use sparsity patterns in origin-destination by path by time tensors to enable efficient batched kernels, and apply mixed precision arithmetic in tensor decomposition steps to reduce computation time and memory use.
These improvements can help fully leverage parallel hardware and are particularly important for large-scale transportation applications.

\section{Relationship to Existing Computational Methods and Future Research Directions}

The FTT framework establishes connections with diverse computational traditions in transportation network modeling while opening pathways for future innovations. Our approach builds upon and extends several established methodological streams in traffic assignment, each contributing valuable insights to our integrated tensor-based perspective.

Traditional algorithmic approaches in traffic assignment have evolved through distinct computational paradigms. Path-based methods \citep{jayakrishnan1994faster, xie2018greedy} directly optimize path flows—a concept that naturally aligns with our tensor formulation, as these path flows become tensor slices with explicit gradient relationships. Similarly, origin-based algorithms, column-generation methods \citep{lu2009equivalent} and bush-based approaches \citep{bar2002origin, nie2012note, gentile2014local} decompose problems by origin node, corresponding to our framework's capacity to organize the OD tensor along origin dimensions for efficient parallel processing. Recent advancements include the work of \citep{liu2024inertial}, who proposed a novel inertial-type conjugate gradient projection algorithm with rigorous convergence guarantees.

The critical question of convergence analysis \citep{boyce2004convergence} gains new dimensions in our framework through tensor decomposition techniques already being applied to traffic patterns \citep{chen2019bayesian, sun2016understanding, wang2023transportation}. These decomposition methods identify dominant patterns that guide computational effort toward the most significant components of network flows. For dynamic applications, our approach connects with both the theoretical foundations \citep{szeto2006dynamic, long2019link, peeta2001foundations} and practical implementations \citep{yildirimoglu2014approximating, idoudi2024smart, idoudi2022agent} of DTA by incorporating temporal dimensions as explicit tensor axes. Another promising extension involves harnessing ML to augment DTA, particularly in simulation-based contexts that become computationally intensive at large scale \citep{ameli2020cross}. Recent ML methods can learn network flow patterns directly (e.g., predicting path flows via Transformers \citep{ameli2025machine}), reducing repetitive equilibrium calculations and speeding up convergence. This approach dovetails with advanced parallel strategies and metaheuristic solution methods, opening up new possibilities for real-time, multimodal applications under the FTT framework. 

In broader transportation planning contexts, the FTT framework relates to recent advances in multi-modal coordination \citep{honma2024locational, luo2024innovation, zhang2022integrated}, differentiable optimization techniques \citep{liu2023end, liu2025end}, and simulation-based urban systems \citep{ameli2022evolution}. These connections extend to activity-based modeling approaches \citep{rasouli2014activity} and complex network design problems \citep{farahani2013review}. As transportation planning continues evolving toward integrated frameworks \citep{zhou2009alternative, papageorgiou2020review}, our tensor-based architecture offers analytical tools that bridge traditional modeling boundaries \citep{florian1987efficient, qian2011computing}.

Building on these connections, the following key research directions emerge that leverage the strengths of our framework while addressing remaining challenges:

\textbf{Computational Advances and Scalability:} The tensor-based representation creates opportunities for leveraging modern parallel computing architectures \citep{ameli2020simulation}. Future research should develop specialized parallelization strategies and hardware acceleration using GPUs/TPUs that exploit the mathematical structure of transportation tensors. The integration of automatic differentiation (AD) and computational graph techniques has revolutionized gradient computation, eliminating manual differentiation errors while enhancing numerical stability \citep{Margossian2019, Paszke2017}. These efficiency gains are demonstrated in CG-enabled multinomial probit ICLV models \citep{Ma2022} and multi-day activity chain estimation \citep{Liu2021}. Beyond AD, \citet{Lederrey2021} leverage stochastic gradient descent and adaptive-batch techniques to enhance computational performance for discrete choice models, while \citet{MartinBaos2023b} refine loss functions using penalized maximum likelihood estimation, demonstrating the robustness of this approach for large-scale datasets. Graph-based processing with specialized tensor computation libraries offers promising pathways for handling high-dimensional multimodal systems, while dynamic tensor decomposition methods could significantly reduce computational requirements for real-time applications.

\textbf{Behavioral Representation:} While our current framework establishes the mathematical foundations for network flow modeling, extending it to incorporate more realistic behavioral elements remains an important direction \citep{vanCranenburgh2022}. Recent studies have revealed significant overlaps between behavioral econometric models and tensor-based machine learning frameworks, particularly in parameter estimation processes where both domains employ non-convex optimization. Hybrid approaches integrate tensor-based ML architectures into econometric models through two primary pathways: First, in model structure enhancement, \citet{Sifringer2020} propose Dense Neural Network (DNN)-embedded utility specifications within MNL frameworks, while \citet{Wang2020a} develop the Alternative-Specific Utility Deep Neural Network to incorporate domain-knowledge and preserve behavioral interpretability. \citet{Han2022} implement DNNs to estimate flexible taste parameters rather than deterministic utilities, enabling complex preference variations. \citet{Phan2022} further incorporate attention mechanisms to dynamically quantify taste heterogeneity, and \citet{MartinBaos2024} develop kernel logistic regression (KLR) embedded models to improve transport demand analysis. Such enhancements not only improve predictive accuracy but also provide key economic indicators similar to traditional econometric models, such as market shares, Willingness to Pay and Value of Time \citep{MartinBaos2023a, Wang2020b}. Future research should integrate multimodal and multisource data to enhance the modernization and real-time adaptability of activity-based models and behavioral analysis.

\textbf{Policy Applications:} The analytical power of the tensor-based approach enables quantitative assessment of complex multimodal network impacts, with direct applications to congestion pricing, parking management, and infrastructure investment decisions. Distributed implementations across operational entities would facilitate coordination in environments with fragmented governance structures, while real-time control applications would support sustainable urban mobility through rigorous multimodal transportation coordination. Our framework also connects with emerging research on system resilience, such as resilience as a service frameworks \citep{amghar2024resilience, jaber2025methodological} and spatiotemporal models for railway timetable resilience \citep{chen2025enhancing}, which emphasize the importance of robust transportation systems under disturbance scenarios.

By building upon established computational methods while advancing these research directions, the FTT framework provides both immediate practical contributions and a foundation for future innovations in transportation network modeling and optimization.

\section{Acknowledgements}

M. Ameli acknowledges support from the French ANR research project SMART-ROUTE (grant number ANR-24-CE22-7264).

Zhou and Zhu are supported by the National Science Foundation (NSF) under grant no. TIP-2303748, titled ”CONNECT: Consortium of Open-Source Planning Models for Next-Generation Equitable and Efficient Communities and Transportation”.
\section*{Conflicts of interest}
None.

\setcounter{equation}{0}
\renewcommand{\theequation}{A.\arabic{equation}}


\appendix
\section{Notation System for Flow-Through Tensor Framework} \label{appendixa}
\addcontentsline{toc}{section}{Notation System for Flow-Through Tensor Framework}
\label{appendixC}
\vspace{-15pt} 
\begin{table}[H]
\centering
\caption{Comprehensive Notation in the Flow-Through Tensor Framework}
\renewcommand{\arraystretch}{1.2}
\begin{tabular}{>{\bfseries}l p{10cm}}
\toprule
\multicolumn{2}{l}{\textit{Sets and Indices}} \\
\midrule
$O$ & Set of origin nodes $o$\\
$D$ & Set of destination nodes $d$\\
$OD$ & Set of origin-destination pairs $(o,d)$ \\
$P$ & Set of paths $p$ \\
$P_{(o,d)}$ & Set of paths connecting OD pair $(o,d)$ \\
$L$ & Set of links $\ell$ in the network \\
\midrule
\multicolumn{2}{l}{\textit{Flow Variables}} \\
\midrule
$f_{od}$ & Flow between origin $o$ and destination $d$ \\
$f_p$ & Flow on path $p$ \\
$f_\ell$ & Flow on link $\ell$ \\
$\mathbf{f}_{OD}$ & Tensor form of flows between OD pairs \\
$\mathbf{f}_P$ & Tensor form of path flows \\
$\mathbf{f}_L$ & Tensor form of link flows \\
$\mathcal{F}_{OD}$ & Tensor of vehicle OD flows \\
$\mathcal{F}_{P}$ & Tensor of vehicle path flows \\
$\mathcal{F}_{L}$ & Tensor of vehicle link flows \\
\midrule
\multicolumn{2}{l}{\textit{Travel Time Variables}} \\
\midrule
$t_{od}$ & Travel time between origin $o$ and destination $d$ \\
$t_p$ & Travel time on path $p$ \\
$t_\ell$ & Travel time on link $\ell$ \\
$t_\ell^0$ & Free-flow travel time on link $\ell$ \\
$\mathbf{t}_{OD}$ & Vector form of OD travel times \\
$\mathbf{t}_P$ & Vector form of path travel times \\
$\mathbf{t}_L$ & Vector form of link travel times \\
$\mathcal{T}_{OD}$ & Tensor of vehicle OD travel times \\
$\mathcal{T}_{P}$ & Tensor of vehicle path travel times  \\
$\mathcal{T}_{L}$ & Tensor of vehicle link travel times  \\
\midrule
\multicolumn{2}{l}{\textit{Incidence Matrices}} \\
\midrule
$\mathbf{B}_{\mathrm{OD},\mathrm{P}}$ & OD-to-Path probability matrix where $b_{od,p}$ is the probability of an agent from OD pair $(o,d)$ choosing path $p$; dimension: $\lvert OD\rvert \times \lvert P\rvert$ \\
$\mathbf{A}_{\mathrm{P},\mathrm{L}}$ & Path-to-Link incidence matrix where $a_{p,\ell} = 1$ if link $\ell$ is on path $p$; dimension: $\lvert P\rvert \times \lvert L\rvert$ \\
\bottomrule
\end{tabular}
\label{tab:appendix_notation}
\end{table}

\section{Backpropagation and the Method of Adjoints in Flow-Through-Tensor Networks}
\label{appendixb}

The method of adjoints, pioneered by \cite{bryson1961gradient} and elaborated by \cite{BrysonHo69}, provides a framework that directly parallels backpropagation in neural networks. This connection reveals how neural network training fundamentally solves an optimal control problem through Lagrangian duality.

\subsection*{Neural Networks and Flow-Through-Tensor Framework as Constrained Optimization}

Standard deep learning formulates a composition of functions $\psi(x;\vartheta)=f_{\ell}\circ f_{\ell-1}\circ\cdots\circ f_1(x)$ to minimize:
\begin{equation}
\min_{\psi}\frac{1}{n}\sum_{k=1}^{n}\text{loss}(\psi(x_k),y_k). \tag{B.1}
\end{equation}

This can be reformulated as a constrained problem:
\begin{align}
\min_{\vartheta}&\; \frac{1}{n}\sum_{k=1}^{n}\text{loss}(z_k^{(\ell)},y_k) \tag{B.2a} \\
\text{subject to: } &z_k^{(i)}=f_i(z_k^{(i-1)},\vartheta_i) \text{ for } i=1,2,...,\ell, \tag{B.2b}
\end{align}
 In this context, \(y_k\) represents the desired or target output for the \(k\)th example. \(\vartheta = \{\vartheta_1, \vartheta_2, \dots, \vartheta_\ell\}\) is the set of parameters
   to be learned; these affect how the input is transformed through the layers.

The corresponding Lagrangian for this problem is:
\begin{equation}
\mathcal{L}(z,\vartheta,\mathbf{p}):=\text{loss}(z^{(\ell)},y)-\sum_{i=1}^{\ell}\mathbf{p}_i^{\mathsf{T}}(z^{(i)}-f_i(z^{(i-1)},\vartheta_i)). \tag{B.3}
\end{equation}

Taking derivatives with respect to each layer's activations and parameters:
\begin{align}
\nabla_{z^{(i)}}\mathcal{L} &= -\mathbf{p}_i+\nabla_{z^{(i)}}f_{i+1}(z^{(i)},\vartheta_{i+1})^{\mathsf{T}}\mathbf{p}_{i+1}, \tag{B.4a} \\
\nabla_{z^{(\ell)}}\mathcal{L} &= -\mathbf{p}_{\ell}+\nabla_{z^{(\ell)}}\text{loss}(z^{(\ell)},y), \tag{B.4b} \\
\nabla_{\vartheta_i}\mathcal{L} &= \nabla_{\vartheta_i}f_i(z^{(i-1)},\vartheta_i)^{\mathsf{T}}\mathbf{p}_i. \tag{B.4c}
\end{align}

Setting these derivatives to zero reveals that the adjoint variables $\mathbf{p}_i$ propagate gradients backward through the network, with $\mathbf{p}_\ell$ initialized at the final layer as the gradient of the loss function, and subsequent adjoints computed recursively backward through the network.


Our Flow-Through-Tensor framework follows an analogous structure. For a transportation network with fixed OD flows $\mathbf{f}_{OD}$:

\begin{align}
\min_{\mathbf{f}_P} &\; Z(\mathbf{f}_L, \mathbf{t}_L, \mathbf{t}_P). \tag{B.5a} \\
\text{subject to: } &\mathbf{B}^{I}\mathbf{f}_P = \mathbf{f}_{OD} \text{ (flow conservation)} \tag{B.5b} \\
&\mathbf{f}_P \geq 0 \text{ (non-negativity)} \tag{B.5c} \\
&\mathbf{f}_L = \mathbf{A}^{\mathsf{T}}\mathbf{f}_P \text{ (link flows)} \tag{B.5d} \\
&\mathbf{t}_L = \phi(\mathbf{f}_L) \text{ (link travel times)} \tag{B.5e} \\
&\mathbf{t}_P = \mathbf{A}\mathbf{t}_L \text{ (path travel times)}, \tag{B.5f}
\end{align}

While the neural network explicitly optimizes $\vartheta$, the FTT model primarily focuses on optimizing variables such as the path flows $\mathbf{f}_P$. The structure of the FTT model is dictated by fixed components:
\begin{itemize}
    \item \textbf{Matrices:} Matrices such as $\mathbf{A}$ and $\mathbf{B}^{I}$ define relationships (e.g., link flows and flow conservation).
    \item \textbf{Nonlinear Functions:} Nonlinear functions like $\phi(\cdot)$ capture complex relationships such as volume delay.
\end{itemize}
Thus, while the primary structural parameters (e.g., the matrices and the functional form of $\phi$) are typically fixed or pre-calibrated, the variables $\mathbf{f}_L$, $\mathbf{t}_L$, $\mathbf{t}_P$, and even $\mathbf{f}_{OD}$ (if not strictly given) are computed through the forward pass and updated during the optimization process. In this sense, these variables are "learnable" in that their values are adjusted as part of solving the overall constrained optimization problem—much as the activations in a neural network are computed and then used to determine gradients for updating $\vartheta$.

The Lagrangian for this transportation problem is:
\begin{align}
\mathcal{L} = &Z(\mathbf{f}_L, \mathbf{t}_L, \mathbf{t}_P) - \mathbf{p}_1^{\mathsf{T}}(\mathbf{f}_L - \mathbf{A}^{\mathsf{T}} \cdot \mathbf{f}_P) - \mathbf{p}_2^{\mathsf{T}}(\mathbf{t}_L - \phi(\mathbf{f}_L)) \nonumber \\
&- \mathbf{p}_3^{\mathsf{T}}(\mathbf{t}_P - \mathbf{A}^{\mathsf{T}}\cdot\mathbf{t}_L) - \boldsymbol{\lambda}^{\mathsf{T}}(\mathbf{B}^{I}\cdot\mathbf{f}_P - \mathbf{f}_{OD})  +\boldsymbol{\mu}^{\mathsf{T}}\mathbf{f}_P \tag{B.6}
\end{align}
where $\boldsymbol{\mu} \leq 0$, the vectors $\mathbf{p}_1$, $\mathbf{p}_2$, and $\mathbf{p}_3$ are adjoint variables that correspond to the constraints that link variables across layers. The Karush-Kuhn-Tucker (KKT) conditions yield the following.

\begin{align}
\nabla_{\mathbf{f}_L}\mathcal{L} &= \nabla_{\mathbf{f}_L}Z - \mathbf{p}_1 - \nabla_{\mathbf{f}_L}\phi(\mathbf{f}_L)^{\mathsf{T}} \mathbf{p}_2= 0, \tag{B.7a} \\
\nabla_{\mathbf{t}_L}\mathcal{L} &= \nabla_{\mathbf{t}_L}Z - \mathbf{p}_2 - \mathbf{A}^{\mathsf{T}}\mathbf{p}_3 = 0, \tag{B.7b} \\
\nabla_{\mathbf{t}_P}\mathcal{L} &= \nabla_{\mathbf{t}_P}Z - \mathbf{p}_3 = 0, \tag{B.7c} \\
\nabla_{\mathbf{f}_P}\mathcal{L} &= \mathbf{A}\mathbf{p}_1 - (\mathbf{B}^{\mathrm{I}})^{\mathsf{T}}\boldsymbol{\lambda} + \boldsymbol{\mu} = 0. \tag{B.7d}
\end{align}

\subsection*{Solution Process and Chain Rule Structure}

The solution process involves two passes analogous to neural network training. The forward pass computes all variables through the network:
\begin{align}
\mathbf{f}_L &= \mathbf{A}\mathbf{f}_P \tag{B.8a} \\
\mathbf{t}_L &= \phi(\mathbf{f}_L) \tag{B.8b} \\
\mathbf{t}_P &= \mathbf{A}\mathbf{t}_L \tag{B.8c}
\end{align}

The backward pass solves for adjoint variables recursively, starting from the output layer and moving backward:
\begin{align}
\mathbf{p}_3 &= \nabla_{\mathbf{t}_P}Z \tag{B.9a} \\
\mathbf{p}_2 &= \nabla_{\mathbf{t}_L}Z + \mathbf{A}\mathbf{p}_3 \tag{B.9b} \\
\mathbf{p}_1 &= \nabla_{\mathbf{f}_L}Z + \nabla_{\mathbf{f}_L}\phi(\mathbf{f}_L)^{\mathsf{T}}\mathbf{p}_2 \tag{B.9c}
\end{align}

These adjoint variables represent sensitivities that propagate backward through the computational graph. The final gradient with respect to path flows is:
\begin{equation}
\nabla_{\mathbf{f}_P}Z = \mathbf{A}\mathbf{p}_1 - (\mathbf{B}^{\mathrm{I}})^{\mathsf{T}} \boldsymbol{\lambda} + \boldsymbol{\mu}
 \tag{B.10}
\end{equation}

The nested dependencies create a chain rule structure identical to neural network backpropagation:
\begin{equation}
\frac{\partial Z}{\partial \mathbf{f}_P} = \mathbf{A}\left[\frac{\partial Z}{\partial \mathbf{f}_L} + \frac{\partial \phi}{\partial \mathbf{f}_L}^{\mathsf{T}}\left(\frac{\partial Z}{\partial \mathbf{t}_L} + \mathbf{A}^{\mathsf{T}}\frac{\partial Z}{\partial \mathbf{t}_P}\right)\right] \tag{B.11}
\end{equation}

\section{Proof of Pareto-Improving Coordination Benefits}
\label{appendixc}

\setcounter{equation}{0}
\renewcommand{\theequation}{B.\arabic{equation}}

\begin{proof}[Proof of Theorem~\ref{thm:pareto}]
Under UE, the system cost is $t_{UE} = 1$. With rotation, the system cost becomes:
\begin{align}
    t_{SO} &= p \cdot \overline{t}_{P} + (1 - p) \cdot \overline{t}_{NP} \notag \\
                   &= \frac{p}{2} + \left(1 - \frac{p}{2} \right)^{\beta + 1} \tag{C.1}\label{eq:system-cost}
\end{align}

Under the pure user equilibrium (i.e., when $p = 0$), every traveler uses Route $b$, so:
\begin{equation}
    t_{UE} = 1^{\beta} = 1 \tag{C.2}\label{eq:UE-cost}
\end{equation}

Therefore, the road user benefit from the rotation $\Delta_p$ is the difference between the user equilibrium cost and the system cost:
\begin{align}
\Delta_p &= t_{UE} - t_{SO} \notag \\
         &= 1 - \left[ \frac{p}{2} + \left(1 - \frac{p}{2} \right)^{\beta + 1} \right] \tag{C.3}\label{eq:benefit}
\end{align}

To approximate the benefit for small $p$, we apply a first-order Taylor approximation to the nonlinear term:
\begin{equation}
\left(1 - \frac{p}{2} \right)^{\beta+1} \approx 1 - \frac{(\beta+1) p}{2} \tag{C.4}
\end{equation}

Thus, the approximate benefit is:
\begin{equation}
\Delta_p \approx \frac{\beta p}{2} \tag{C.5}
\end{equation}

This confirms that the benefit is approximately linear in both the congestion sensitivity $\beta$ and the participation rate $p$.
\end{proof}
\vspace{-1em}

\bibliographystyle{elsarticle-harv}
\bibliography{cas-refs}

\end{document}